\pgfplotsset{compat = 1.18}
\definecolor{darkblue}{rgb}{0.0, 0.0, 0.55}
\definecolor{darkred}{rgb}{0.5, 0.0, 0.13}
\definecolor{darkgreen}{rgb}{0.0, 0.2, 0.13}
\theoremstyle{plain}
\newtheorem{theorem}{Theorem}[section]
\newtheorem{corollary}[theorem]{Corollary}
\newtheorem{proposition}[theorem]{Proposition}
\newtheorem{lemma}[theorem]{Lemma}
\theoremstyle{definition}
\newtheorem{definition}[theorem]{Definition}
\theoremstyle{remark}
\def\bee{\begin{eqnarray}}
\def\bes{\begin{eqnarray*}}
\def\eee{\end{eqnarray}}
\def\ees{\end{eqnarray*}}
\def\dsp{\displaystyle}
\def\one{{\hbox{1{\kern -0.35em}1}}}
\def\a{\alpha}
\def\e{\epsilon}
\def\s{\sigma}
\def\t{\tau}
\def\(({\Big(}
\def\)){\Big)}
\def\({\bigl(}
\def\){\bigr)}
\def\]{\right]}
\def\[{\left[}
\newcommand{\R}{\mathbb{R}}
\newcommand{\X}{\mathbb{X}}
\title{Simplicial Hausdorff Distance for Topological Data Analysis}
\author{Nkechi Nnadi\footnotemark[1] \footnotemark[2] and Daniel Isaksen\footnotemark[1]
}
\date{July 2024}
\begin{document}
 \footnotetext[1]{Department of Mathematics, Wayne State University, Detroit, MI, USA} \footnotetext[2]{\textcolor{blue}{\emph{nkechinnadi@wayne.edu}} \textit{Corresponding author}}
\maketitle

\begin{abstract}
Many practical applications in topological data analysis arise from data in the form of point clouds, which then yield simplicial complexes. The combinatorial structure of simplicial complexes captures the topological relationships between the elements of the complex. In addition to the combinatorial structure, simplicial complexes possess a geometric realization that provides a concrete way to visualize the complex and understand its geometric properties. This work presents an amended Hausdorff distance as an extended metric that integrates geometric proximity with the topological features of simplicial complexes. We also present a version of the simplicial Hausdorff metric for filtered complexes and show results on its computational complexity. In addition, we discuss concerns about the monotonicity of the measurement functions involved in the setup of the simplicial complexes.
\end{abstract}

\section{Introduction}
 While the geometric realization of a (finite) simplicial complex provides insights into its geometric properties, the combinatorial structure captures its topological properties. However, such a geometric realization cannot always be embedded in $\R^d$ for some fixed $d>0$. The overlaps between faces of the simplicial complex form higher-dimensional simplices, which cannot be fully realized geometrically in $\R^d$. In order to geometrically realize an abstract simplicial complex $X$ without self-intersections in $\R^d$, it is necessary that $d$ be at least equal to the dimension of $X$. 
 
 For example, a $2$-dimensional simplicial complex can be embedded in $\R^2$ without self-intersections, while a $3$-dimensional complex requires embedding in $\R^3$. While we can represent the vertices and edges of a tetrahedron in $\R^2$, we cannot fully represent the three-dimensional structure without distorting it. A tetrahedron inherently has three dimensions, and trying to capture it in only two dimensions necessarily involves some loss of information.  Therefore, while it is possible to represent a tetrahedron in $\R^2$ pictorially, it is not a faithful representation of the original three-dimensional object.
 
 \begin{figure}[h!]
     \centering
     \includegraphics[width=0.3\linewidth]{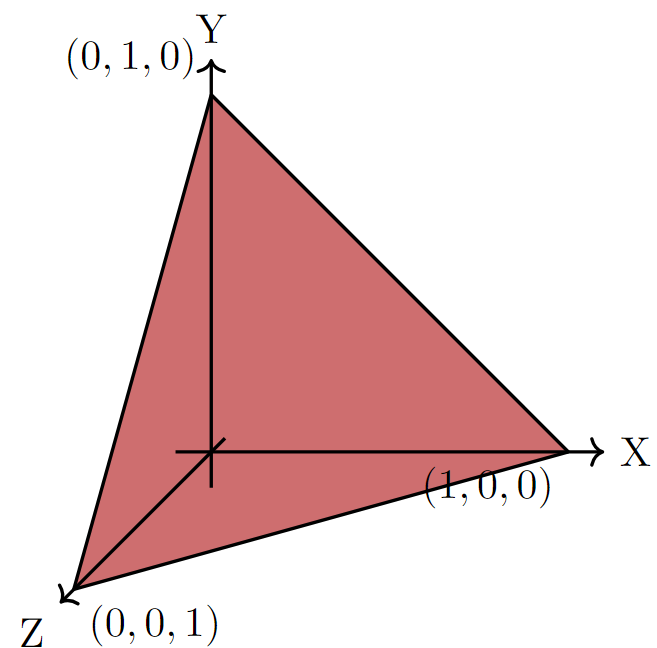}
     \caption{An embedding of a $2$-simplex is possible only for $\R^d$ with $d\geq 2$.}
     \label{fig1}
 \end{figure}
 
Lower-dimensional spaces cannot fully capture the geometric richness of higher-dimensional structure. This is because any $k$-dimensional simplices, with $k\geq d$, do not lie in $\R^d$. When higher-dimensional simplicial complexes are presented in lower dimensions, they may be difficult to recognize and interpret. This necessitates the shading scheme usually employed in figures of simplicial complexes to distinguish higher-order simplices from lower-order simplices.

Additionally, in order to capture the geometric and topological features of simplicial complexes, we need to be able to distinguish different dimensions of simplices. 

\begin{figure}[h!]
    \centering
    \includegraphics[width=0.5\linewidth]{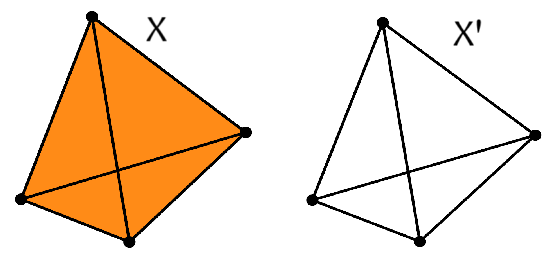}
    \caption{Can we tell that $X$ and $X'$ are different?}
    \label{fig2}
\end{figure}

Moreover, for most practical applications, there are often constraints on the number of dimensions that can be feasibly used due to hardware limitations, memory constraints, and the need for efficient algorithms. 
Hence, it is essential to strike a balance between the representational power of higher dimensions and the practical constraints of computational resources and data characteristics. The combinatorial nature and geometric realization of simplicial complexes inform their representation in lower-dimensional spaces. While geometric intuition is valuable, understanding the interplay between geometry and topology is essential for interpreting lower-dimensional embeddings and their implications for data analysis and visualization. For these reasons, we investigate a distance measurement between simplicial complexes that reflects both geometric and topological proximity. 

\section{Simplicial Hausdorff Distance}
\subsection{The Hausdorff distance}
The canonical choice of a distance to measure similarity between subsets of a metric space is the \emph{Hausdorff distance}. 
The classical Hausdorff distance is described as follows \cite {hausdorff2}. Let $A$ and $B$ be two subsets of a metric space $X$. The \emph{directed Hausdorff distance} between $A$ and $B$ is defined as follows:
$$\vec{d}_H(A,B) =  \sup\limits_{a\in A} \inf\limits_{b\in B} d_X(a, b)$$
where $d_X(\cdot,\cdot)$ is the metric on $X$. It effectively illustrates the concept of how $B$ must be uniformly enlarged in all directions to encompass $A$. The \emph{Hausdorff} distance is defined as follows:
$$d_H(A,B)=\max\Bigl\{\vec{d}_H(A,B), \: \vec{d}_H(B,A)\Bigr\}.$$

The Hausdorff distance is a \emph{pseudo-metric} on the power set, $2^X$, of $X$. This means that it is non-negative, symmetric, and satisfies the triangle inequality; however, distinct sets may have their Hausdorff distance equal to zero. The Hausdorff distance is a metric when restricted to a collection $\mathcal{H}$, of non-empty compact subsets of $X$\cite{munkres}. 

The Hausdorff distance is a natural choice of a distance for compact subsets of $\R^d$
because it is invariant under isometries (for instance, rotations and translations) of both sets. However, the Hausdorff distance is sensitive to the presence of outliers in data sets \cite{chazal}, and computationally expensive when applied to geometric simplices as compact subsets of Euclidean space.

\subsection{Definition of the \textit{simplicial} Hausdorff distance}
Consider the class, $\mathbb{X}^d$, of pairs $(X,f)$ where $X$ is a finite simplicial complex, most precisely a Vietoris-Rips complex, and $f$ is a one-to-one measurement function such that $f: X_0\rightarrow \mathbb{R}^d$ yields a point cloud, with $X_0$ as the set of vertices of $X$. 

We introduce a concept of proximity between simplicial complexes $(X,f)$ and $(Y,g)$ in $\X^d$, known as $\e$-closeness. As a directed distance, $\e$-closeness  measures the maximal nearest neighbor distance of a vertex in $(X,f)$ from a vertex in $(Y,g)$ by sifting through all the participating simplices.

\begin{definition}[$\epsilon$-closeness] Let $(X,f)$ and $(Y,g)$ be from the class $\mathbb{X}^d$. The pair $(X,f)$ is \textcolor{darkred}{\emph{$\epsilon$-close}} to $(Y,g)$ if and only if: for every $k$-simplex $\sigma$ in $X$, there is a $k$-simplex $\tau$ in $Y$ such that for any vertex $v \in \sigma$, there is some vertex $w\in \tau$ with $d\left(f(v), g(w)\right) < \epsilon,$ for every $k$.
\end{definition}

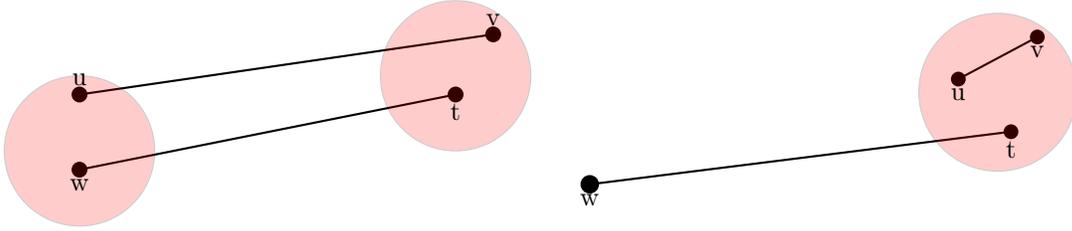
\begin{figure}[ht!]
\centering
\begin{minipage}{0.45\textwidth}
\begin{tikzpicture}

\def\radius{1}

\coordinate (w) at (0,0.5);
\coordinate (t) at (5,1.5);
\coordinate (u) at (0,1.5);
\coordinate (v) at (5.5,2.3);

\draw[thick] (w) -- (t);
\draw[thick] (u) -- (v);

\fill[black] (w) circle (3pt);
\fill[black] (t) circle (3pt);
\fill[black] (u) circle (3pt);
\fill[black] (v) circle (3pt);

\coordinate (CenterLeft) at (0, 0.75); 
\coordinate (CenterRight) at (5, 1.75); 

\draw[fill=red, opacity=0.2] (CenterLeft) circle (\radius);
\draw[fill=red, opacity=0.2] (CenterRight) circle (\radius);

\node[below] at (w) {w};
\node[below] at (t) {t};
\node[above] at (u) {u};
\node[above] at (v) {v};

\end{tikzpicture}
\end{minipage}
\begin{minipage}{0.45\textwidth}
\begin{tikzpicture}[scale=0.7]

\def\radius{1.5}

\coordinate (w) at (2,0);
\coordinate (t) at (10,1); 
\coordinate (u) at (9,2); 
\coordinate (v) at (10.5,2.8); 

\draw[thick] (w) -- (t);
\draw[thick] (u) -- (v);

\fill[black] (w) circle (5pt);
\fill[black] (t) circle (4pt);
\fill[black] (u) circle (4pt);
\fill[black] (v) circle (4pt);

\coordinate (CenterRight) at (9.75, 1.75); 

\draw[fill=red, opacity=0.2] (CenterRight) circle (\radius);

\node[below] at (w) {w};
\node[below] at (t) {t};
\node[below] at (u) {u};
\node[below] at (v) {v};

\end{tikzpicture}
\end{minipage}

\caption[Illustration of $\e$-closeness]{Illustration of $\e$-closeness: each orange ball has radius $\frac{\e}{2}$. In the figure on the left, the $1$-simplex $wt$ is $\e$-close to $uv$ because $d(u,w)$ and $d(v,t) < \e$; in the figure on the right, the $1$-simplex $uv$ is $\e$-close to $wt$ because $d(u,t)$ and $d(v,t) < \e$; but $wt$ is not $\e$-close to $uv$ because $d(w,u)> \e$ and $d(w,v) > \e$. The $\e$-closeness relation is not symmetric.}
\label{fig8}
\end{figure}

Next, we present the definition of the simplicial Hausdorff distance in terms of $\e$-closeness.

\begin{definition}[Simplicial Hausdorff distance]
The simplicial Hausdorff distance is a map $\delta:\mathbb{X}^d\times \mathbb{X}^d\rightarrow \mathbb{R}_{\geq 0}$ defined as:
\begin{eqnarray}\label{scd} 
   \nonumber \delta\Big((X,f),(Y,g)\Big) &= \max\Bigl\{ \vec{d}\left((X,f),(Y,g)\right), \vec{d}\left((Y,g), (X,f)\right)  \Bigr\}.
\end{eqnarray}
where the directed distance $\displaystyle \vec{d}\left((X,f),(Y,g)\right) = \inf\{\epsilon>0: (X,f) \text{ is }  \epsilon\text{-close to }(Y,g)\}$.
\end{definition}

\subsection{Metric Properties of the Simplicial Hausdorff distance}
This section aims to present interesting observations regarding the behavior of the simplicial Hausdorff distance.

\begin{proposition} \label{2-6}
The pair $(X,f)$ is $\epsilon$-close to $(Y,g)$ if and only if 
\begin{equation}
\max\limits_{k} \max\limits_{\substack{\sigma \in X\\ \dim{\sigma}=k} }\min\limits_{\substack{\tau \in Y\\ \dim{\tau}=k}} \max\limits_{v\in \sigma} \min\limits_{w \in \tau} d(f(v),g(w))\: < \: \epsilon
\end{equation}
\end{proposition}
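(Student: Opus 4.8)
The plan is to show that the nested extremum expression on the right-hand side is nothing more than the quantifier-free translation of the nested quantifiers appearing in the definition of $\e$-closeness, using the elementary fact that over a \emph{finite} index set a universal quantifier corresponds to a $\max$ and an existential quantifier to a $\min$. Concretely, for any finite family $\{a_i\}$ of real numbers one has $\max_i a_i < \e \iff (\forall i)\, a_i < \e$ and $\min_i a_i < \e \iff (\exists i)\, a_i < \e$; both equivalences require only that the extrema be attained, which holds because $X$ and $Y$ are finite simplicial complexes and hence have only finitely many simplices and vertices. Since each link in the chain is itself a biconditional, chaining them will establish both directions of the proposition at once.

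First I would peel off the quantifiers from the inside out. Fixing $k$, a $k$-simplex $\sigma \in X$, a $k$-simplex $\tau \in Y$, and a vertex $v \in \sigma$, the innermost clause ``there is some $w \in \tau$ with $d(f(v),g(w)) < \e$'' is equivalent to $\min_{w \in \tau} d(f(v),g(w)) < \e$. Wrapping the universal ``for any vertex $v \in \sigma$'' around this yields $\max_{v \in \sigma} \min_{w \in \tau} d(f(v),g(w)) < \e$. The existential ``there is a $k$-simplex $\tau$ in $Y$'' then produces $\min_{\tau} \max_{v} \min_{w} d(f(v),g(w)) < \e$, and the two outer universals ``for every $k$-simplex $\sigma$ in $X$'' and ``for every $k$'' supply the two remaining maxima, giving exactly the left-hand side of the inequality in the statement. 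Every step invokes one of the two elementary equivalences above, and the finiteness of the complexes guarantees that each extremum is a genuine $\max$ or $\min$ rather than merely a supremum or infimum.

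The one point that requires care, and which I regard as the main obstacle, is the handling of dimensions $k$ for which one complex carries simplices and the other does not. I would adopt the standard conventions $\max \emptyset = -\infty$ and $\min \emptyset = +\infty$ and check that they keep the two sides aligned. If $X$ has no $k$-simplex, the universal ``for every $k$-simplex $\sigma \in X$'' is vacuously satisfied, and correspondingly $\max_{\dim \sigma = k}$ ranges over the empty set and contributes $-\infty$ to the outer maximum, so neither side is affected. If instead $X$ has a $k$-simplex but $Y$ has none, the existential ``there is a $k$-simplex $\tau \in Y$'' fails, so $(X,f)$ is not $\e$-close; correspondingly $\min_{\dim \tau = k}$ is taken over the empty set and equals $+\infty$, forcing the whole nested expression to $+\infty \not< \e$. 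Verifying this correspondence in each case confirms that the conventions preserve the biconditional across all dimensions, after which chaining the elementary equivalences completes the argument.
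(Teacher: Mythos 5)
Your proof is correct and takes essentially the same approach as the paper's: it peels off the quantifiers in the definition of $\epsilon$-closeness from the inside out, replacing each existential with a $\min$ and each universal with a $\max$ over the finite index sets. Your additional care with the empty-index-set conventions ($\max\emptyset=-\infty$, $\min\emptyset=+\infty$) addresses an edge case the paper's proof of this proposition leaves implicit (it is acknowledged only later, in the proof that $\delta$ is an \emph{extended} metric), but it does not change the route of the argument.
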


\begin{proof}
$ $\newline
For brevity, write $X$ for the pair $(X,f)$ and $Y$ for $(Y,g)$. By definition, $X$ is $\e$-close to $Y$ when for each $k$-simplex $\s$ in $X$, there exists some $k$-simplex $\t$ in $Y$ such that for every vertex $v$ in $\s$, we can find some vertex $w$ in $\t$ with $d (f(v),g(w)) < \e $. This is equivalent to the statement: for each $k$-simplex $\s$ in $X$, there exists some $k$-simplex $\t$ in $Y$ such that for every vertex $v$ in $\s$, the minimum $\min\limits_{w\in\t} d(f(v),g(w)) < \e$. This, in turn, is equivalent to stating that for each $k$-simplex $\s$ in $X$, there exists some $k$-simplex $\t$ in $Y$ such that the maximum $\max\limits_{v\in\s} \min\limits_{w\in\t} d(f(v),g(w)) < \e$. This is equivalent to the statement: for each $k$-simplex $\s$ in $X$, the minimum $\min\limits_{\t\in Y} \max\limits_{v\in\s} \min\limits_{w\in\t} d(f(v),g(w)) < \e$. And this is in turn is equivalent to: $\max\limits_{k} \max\limits_{\substack{\sigma \in X\\ \dim{\sigma}=k} }\min\limits_{\substack{\tau \in Y\\ \dim{\tau}=k}} \max\limits_{v\in \sigma} \min\limits_{w \in \tau} d(f(v),g(w))\: < \: \epsilon$.
\end{proof}

Following proposition \ref{2-6}, we present an equivalent but more algorithmic definition of the simplicial Hausdorff distance. 

\begin{corollary}\label{cor1}
The simplicial Hausdorff distance may alternatively be defined as:
    \begin{equation} \label{scd2}
\nonumber \delta\Big((X,f),(Y,g)\Big) = \max \Bigl\{\vec{d}((X,f),(Y,g)), \: \vec{d}((Y,g), (X,f))\Bigr\},
\end{equation}
where $\vec{d}\left((X,f),(Y,g)\right)=\max\limits_{k} \max\limits_{\substack{\sigma \in X\\ \dim{\sigma}=k} }\min\limits_{\substack{\tau \in Y\\ \dim{\tau}=k}} \max\limits_{v\in \sigma} \min\limits_{w \in \tau} d(f(v),g(w))$, and $d(\cdot,\cdot)$ denotes Euclidean distance in $\mathbb{R}^d$. 
\end{corollary}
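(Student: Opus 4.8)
The plan is to recognize that this corollary is an almost immediate consequence of Proposition~\ref{2-6}, once we carefully handle the passage from the strict inequality appearing in $\epsilon$-closeness to the infimum defining the directed distance. Since the simplicial Hausdorff distance is, in both the original definition and the proposed one, the maximum of the two directed distances, the entire content reduces to the single claim that the directed distance $\vec{d}\big((X,f),(Y,g)\big)$, originally defined as an infimum of $\epsilon$-closeness thresholds, equals the explicit quantity
$$M \; := \; \max_{k} \max_{\substack{\sigma\in X \\ \dim\sigma=k}} \min_{\substack{\tau\in Y \\ \dim\tau=k}} \max_{v\in\sigma} \min_{w\in\tau} d(f(v),g(w)).$$
Establishing this equality for each of the two directions finishes the proof by symmetry of the construction.

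First I would invoke Proposition~\ref{2-6}, which states that $(X,f)$ is $\epsilon$-close to $(Y,g)$ precisely when $M < \epsilon$. Substituting this characterization into the original definition of the directed distance gives
$$\vec{d}\big((X,f),(Y,g)\big) \; = \; \inf\{\epsilon > 0 : (X,f) \text{ is } \epsilon\text{-close to }(Y,g)\} \; = \; \inf\{\epsilon > 0 : M < \epsilon\}.$$
Because $M$ is a maximum and minimum taken over the finitely many simplices of the finite complexes $X$ and $Y$, it is a well-defined, finite, nonnegative real number, and all the inner extrema are genuinely attained. The threshold set $\{\epsilon > 0 : \epsilon > M\}$ is therefore exactly the open ray $(M,\infty)$, whose infimum is $M$. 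Concluding $\vec{d}\big((X,f),(Y,g)\big) = M$, and symmetrically for the reverse direction, shows that the two formulas for $\delta$ coincide.

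The one point that requires care, and the only place where the argument could go wrong, is the interplay between the strict inequality $M < \epsilon$ and the infimum. The value $M$ itself is \emph{not} a member of the set $(M,\infty)$, yet it is precisely the greatest lower bound of that set; this is why the strict inequality in the definition of $\epsilon$-closeness does not prevent the directed distance from taking the value $M$. I would make this explicit by a two-sided bound: every $\epsilon > M$ witnesses $\epsilon$-closeness, so $M$ is a lower bound for the threshold set, while no $\epsilon \leq M$ witnesses it, so $M$ is the greatest such lower bound. This careful treatment of the boundary case (in particular when $M = 0$, so the ray is $(0,\infty)$ with infimum $0$) is the main subtlety; everything else is a direct rewriting using Proposition~\ref{2-6}.
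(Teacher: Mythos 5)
Your proof is correct and takes essentially the same route as the paper, which states this corollary without a written proof as an immediate consequence of Proposition~\ref{2-6}; your careful treatment of the strict inequality versus the infimum (the threshold set being the open ray $(M,\infty)$ with infimum $M$) is exactly the step the paper leaves to the reader. One small caveat: $M$ need not be finite when some dimension $k$ occurs in $X$ but not in $Y$ (the minimum over an empty set of $k$-simplices is $+\infty$, which is why the paper later calls $\delta$ an \emph{extended} metric), but your argument still goes through in that case with the convention $\inf \emptyset = +\infty$.
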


Next, we highlight the metric properties of the simplicial Hausdorff distance. To demonstrate the positive definiteness of the simplicial Hausdorff distance, it is necessary to show that the directed distances are well-behaved. This means that the least possible value of the directed distance from a $k$-simplex $X$ to another $k$-simplex $Y$ is zero, and this is achieved only when $X$ may be included in $Y$ similar to the notion of a subcomplex. Hence, we begin by putting some structure for hierarchy on the class $\X^d$.

\begin{definition}[Inclusion of members of $\X^d$]\label{incl}
Consider two pairs $(X,f)$ and $(Y,g)$, from the class $\X^d$. An inclusion of $(X,f)$ in $(Y,g)$ is a map $\phi: X\hookrightarrow Y$ satisfying:

\begin{enumerate}
\item There is an inclusion of the vertices of $X$ in the vertex set of $Y$. That is, $\dsp \phi_0: X_0 \hookrightarrow Y_0$.
\item $\phi$ induces an inclusion of the simplicial complex $X$ in $Y$. That is, $\phi(\sigma) \in Y$ is a $k$-simplex, for each $k$-simplex $\sigma \in X$\label{inclusion}.
\item The diagram below commutes:
\begin{equation*}    
    \begin{tikzcd}
    \centering
	\centering 
    {X_0} && {Y_0} \\
	& {\mathbb{R}^d}
	\arrow["{\phi_0}", from=1-1, to=1-3]
	\arrow["f"', from=1-1, to=2-2]
	\arrow["g", from=1-3, to=2-2]
    \end{tikzcd}
    \end{equation*}
\end{enumerate}
\end{definition}

\begin{definition}{Isomorphism in $\mathbb{X}^d$:}
Two pairs $(X,f), (Y,g) \: \in \mathbb{X}^d$ are \emph{isomorphic} if there exists a map $\displaystyle \phi: X \rightarrow Y$ such that $\phi$ satisfies the conditions of definition \ref{inclusion} and is bijective (as a simplicial map).
\end{definition}

In all the proofs that follow, write $X$ for the pair $(X,f)$ and $Y$ for $(Y,g)$ for the sake of brevity.

\begin{lemma}\label{2}
The directed distance, $\vec{d}\left((X,f), (Y,g)\right)$ is zero if and only if there exists an inclusion $ \displaystyle \phi: X \hookrightarrow Y$ of simplicial complexes in $\X^d$. 
\end{lemma}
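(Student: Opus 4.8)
The plan is to work with the closed-form expression for the directed distance supplied by Corollary~\ref{cor1}, namely
$$\vec{d}\left((X,f),(Y,g)\right)=\max_{k} \max_{\substack{\sigma \in X\\ \dim\sigma=k}}\min_{\substack{\tau \in Y\\ \dim\tau=k}} \max_{v\in \sigma} \min_{w \in \tau} d(f(v),g(w)),$$
and to prove both implications by exploiting that $X$ and $Y$ are \emph{finite}, so every maximum and minimum above is attained, together with the standing assumption that $f$ and $g$ are one-to-one. Write $M$ for the right-hand side; since distances are non-negative, $M=0$ forces every nested max/min term to vanish.

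For the reverse implication, suppose an inclusion $\phi: X\hookrightarrow Y$ exists. Given any $k$-simplex $\sigma\in X$, the image $\phi(\sigma)$ is a $k$-simplex of $Y$ by condition~\ref{inclusion}, and for each vertex $v\in\sigma$ the commuting triangle gives $g(\phi_0(v))=f(v)$, so $d(f(v),g(\phi_0(v)))=0$. Choosing $\tau=\phi(\sigma)$ and matching $v$ with $\phi_0(v)$ drives every inner term to zero, whence $M=0$ and $\vec{d}\left((X,f),(Y,g)\right)=0$.

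The substantive direction is the forward one: assuming $M=0$, I would build the inclusion explicitly. First, apply the vanishing to $0$-simplices: for each vertex $v\in X_0$ there is a vertex $w\in Y_0$ with $f(v)=g(w)$, and since $g$ is injective this $w$ is unique, so the rule $\phi_0(v)=w$ defines a map $\phi_0:X_0\to Y_0$ that makes the triangle commute. Injectivity of $\phi_0$ then follows from injectivity of $f$: if $\phi_0(v_1)=\phi_0(v_2)$ then $f(v_1)=g(\phi_0(v_1))=f(v_2)$, forcing $v_1=v_2$. The remaining and most delicate point is to verify that $\phi_0$ carries simplices to simplices. For a $k$-simplex $\sigma=\{v_0,\dots,v_k\}$, the vanishing of $M$ produces some $k$-simplex $\tau\in Y$ and, for each $v_i$, a vertex $w_i\in\tau$ with $g(w_i)=f(v_i)$; injectivity of $g$ identifies $w_i=\phi_0(v_i)$, and injectivity of $f$ shows the $w_i$ are pairwise distinct. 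Thus $\{\phi_0(v_0),\dots,\phi_0(v_k)\}$ consists of $k+1$ distinct vertices all lying inside the $(k+1)$-vertex simplex $\tau$, so it must equal $\tau$; hence $\phi(\sigma)=\tau\in Y$ is a $k$-simplex. I expect this last step to be the main obstacle, since $M=0$ only guarantees a matching simplex $\tau$ locally for each $\sigma$, and the work lies in showing that the single vertex map $\phi_0$ built from the $0$-simplices is globally consistent with these local matchings -- a consistency that hinges essentially on the injectivity of $f$ and $g$.
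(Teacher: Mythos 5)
Your proposal is correct and follows essentially the same route as the paper's proof: the easy direction uses the commuting triangle $f = g\circ\phi_0$ to kill every term in the max/min formula, and the substantive direction defines $\phi_0$ on vertices from the $k=0$ vanishing and then extends it to all simplices. The only difference is that you spell out two details the paper leaves implicit --- that injectivity of $g$ makes $\phi_0$ well-defined, and the cardinality argument showing the $k+1$ distinct images $\phi_0(v_i)$ must exhaust the $(k+1)$-vertex simplex $\tau$, so that $\phi_0(\sigma)=\tau$ --- which strengthens rather than changes the argument.
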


\begin{proof}
$ $ \newline
Suppose $\vec{d}\left((X,f), (Y,g)\right) =0$. Then, proposition \ref{2-6} and corollary \ref{cor1} imply that for each $k$-simplex $\sigma \in (X,f)$, there exists a $k$-simplex $\tau \in (Y,g)$ with $\displaystyle \max_{v\in \sigma}\min_{w\in\tau} d\left(f(v),g(w)\right) = 0$. Then, for each vertex $v \in \sigma$, there exists a vertex $w\in \tau$ such that $\displaystyle f(v)=g(w).$ This implies that
$\displaystyle f(X_0) \subseteq g(Y_0)$. Since $f$ and $g$ are one-to one, we note that $X_0 \cong \text{image}(f)$ and $Y_0 \cong \text{image}(g)$. Hence we can define a map on the vertex sets  
    \begin{align*}
        \phi_0: X_0& \rightarrow Y_0\\
        f(v) &= g(\phi_0(v))
    \end{align*}
to be an inclusion of $X_0$ in $Y_0$.  

We extend this definition of vertex inclusions to higher-order simplices in the following manner. Supposing still that the directed distance, $\vec{d}\left((X,f), (Y,g)\right) = 0$, then for any $k$-simplex $\sigma \in X$, we can find another $k$-simplex $\tau \in Y$ such that each vertex $v\in\sigma$ is associated with some other vertex $w\in \tau$ so that $f(v)=g(w)$. That is, for any $k$-simplex $\sigma \in X$, we can find another $k$-simplex $\tau \in Y$ such that for every vertex $v \in \sigma$, there is a vertex $w\in\tau$ with $\phi_0(v)=w.$
It follows that for any $k$-simplex $\sigma \in X$, we can find another $k$-simplex $\tau \in Y$ such that $\phi_0(\sigma) = \tau.$ Therefore, $\phi: X \rightarrow Y$ defined in this manner is an inclusion.

Conversely, suppose $\phi: X \rightarrow Y$ is an inclusion of simplicial complexes in the sense of definition \ref{incl}. This means that $f=g \circ \phi_0$, and for every $k$-simplex $\s \in X$, there is another $k$-simplex $\t \in Y$ with $\phi(\s)=\t$. Then $\vec{d}\left((X,f), (Y,g)\right) = \max\limits_{k} \max\limits_{\substack{\sigma \in X\\ \dim{\sigma}=k} }\min\limits_{\substack{\tau \in Y\\ \dim{\tau}=k}} \max\limits_{v\in \sigma} \min\limits_{w \in \tau} d(f(v),g(w))$ is equal to $\max\limits_{k} \max\limits_{\substack{\sigma \in X\\ \dim{\sigma}=k} }\min\limits_{\substack{\tau \in Y\\ \dim{\tau}=k}} \max\limits_{v\in \sigma} \min\limits_{w \in \tau} d(g \circ \phi_0(v),g(w))$. By property $(iii)$ of definition \ref{incl}, it follows that $g \circ \phi_0(v) = g(w)$  for $v\in \sigma$ and $w \in \tau$. Therefore the directed distance $\vec{d}\left((X,f), (Y,g)\right)$ is zero.
\end{proof}

\begin{proposition}\label{main1}
$\delta$ is positive-definite. That is, $\delta\Big((X,f),(Y,g)\Big) \geq 0$ for all pairs $(X,f)$ and $(Y,g)$ in $\X^d$. Moreover, $\delta\Big((X,f),(Y,g)\Big) =0$ if and only if $(X,f)\cong (Y,g)$.
\end{proposition}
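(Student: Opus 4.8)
The plan is to dispatch non-negativity immediately and then reduce the identity-of-indiscernibles claim to Lemma~\ref{2}. For non-negativity, observe that every term $d(f(v),g(w))$ appearing in the directed distance is a Euclidean distance and hence non-negative; the nested $\max$/$\min$ of non-negative quantities is non-negative, so each directed distance is $\geq 0$ and therefore so is their maximum $\delta$. For the harder equivalence, I would first note that since both directed distances are non-negative, $\delta\big((X,f),(Y,g)\big)=0$ holds if and only if $\vec{d}\big((X,f),(Y,g)\big)=0$ \emph{and} $\vec{d}\big((Y,g),(X,f)\big)=0$ simultaneously. Applying Lemma~\ref{2} in each direction then converts this into the existence of inclusions $\phi\colon X\hookrightarrow Y$ and $\psi\colon Y\hookrightarrow X$ in the sense of Definition~\ref{incl}.

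The core of the argument is to upgrade a pair of two-sided inclusions into a genuine isomorphism. Here I would exploit the commuting triangles: the defining relations $f=g\circ\phi_0$ and $g=f\circ\psi_0$ give $f=f\circ(\psi_0\circ\phi_0)$, and since the measurement function $f$ is one-to-one this forces $\psi_0\circ\phi_0=\mathrm{id}_{X_0}$; symmetrically $\phi_0\circ\psi_0=\mathrm{id}_{Y_0}$. Thus the vertex maps $\phi_0$ and $\psi_0$ are mutually inverse bijections between $X_0$ and $Y_0$. To promote this to the level of simplices, I would take any $k$-simplex $\tau\in Y$, use property~(2) of Definition~\ref{incl} for $\psi$ to see that $\psi(\tau)$ is a $k$-simplex of $X$, and then compute $\phi(\psi(\tau))=\tau$ from $\phi_0\circ\psi_0=\mathrm{id}$; this shows $\phi$ is onto on simplices, while injectivity on simplices is inherited from injectivity of $\phi_0$. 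Hence $\phi$ is a bijective simplicial map still satisfying the commuting triangle, i.e. an isomorphism $(X,f)\cong(Y,g)$.

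For the converse, I would start from an isomorphism $\phi\colon(X,f)\to(Y,g)$. It is in particular an inclusion, so Lemma~\ref{2} gives $\vec{d}\big((X,f),(Y,g)\big)=0$. Its inverse $\phi^{-1}$ is again a bijective simplicial map, and the triangle for $\phi$ rearranges to $g=f\circ\phi_0^{-1}$, so $\phi^{-1}$ is an inclusion $Y\hookrightarrow X$; Lemma~\ref{2} then yields $\vec{d}\big((Y,g),(X,f)\big)=0$, whence $\delta=\max\{0,0\}=0$.

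I expect the main obstacle to be the step from two-sided inclusions to an isomorphism: a priori, two injections between finite vertex sets need not be mutually inverse, and an inclusion that is merely bijective on vertices need not be surjective on simplices. Both difficulties are resolved by the rigidity the commuting diagrams impose — the condition $f=g\circ\phi_0$ with $g$ injective pins down $\phi_0=g^{-1}\circ f$ canonically, forcing $\phi_0$ and $\psi_0$ to be inverse to one another — together with the fact that a simplex is determined by its vertex set, which is what lets me transport simplices back and forth and conclude surjectivity.
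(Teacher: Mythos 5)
Your proposal is correct and follows the same overall route as the paper: non-negativity from the Euclidean metric, reduction of $\delta=0$ to the simultaneous vanishing of both directed distances, and Lemma~\ref{2} to produce inclusions $\phi\colon X\hookrightarrow Y$ and $\psi\colon Y\hookrightarrow X$. The one substantive difference is at the step converting the pair of inclusions into an isomorphism: the paper simply asserts that the two inclusions ``imply, without loss of generality, that $\phi=\varphi^{-1}$,'' offering no justification, whereas you actually prove this, using the commuting triangles $f=g\circ\phi_0$ and $g=f\circ\psi_0$ together with injectivity of $f$ to force $\psi_0\circ\phi_0=\mathrm{id}_{X_0}$ (and symmetrically $\phi_0\circ\psi_0=\mathrm{id}_{Y_0}$), and then transporting simplices through $\phi\circ\psi$ to obtain surjectivity of $\phi$ on simplices of every dimension. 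This is exactly the point where the paper's argument is weakest --- two injections between finite sets need not be mutually inverse in general, and an inclusion bijective on vertices need not a priori be surjective on simplices --- so your version supplies rigor the paper lacks while keeping the identical strategy; you also spell out the converse direction (isomorphism implies both directed distances vanish), which the paper leaves implicit.
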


\begin{proof}
$ $\newline
Observe that $\displaystyle \delta\Big((X,f),(Y,g)\Big) = 0$ 
if and only if $\vec{d}((X,f),(Y,g)) = \vec{d}((Y,g), (X,f)) \;= \:0$. By lemma \ref{2}, this means that there exist simplicial inclusion maps $X\xhookrightarrow{\phi} Y$ and $Y\xhookrightarrow{\varphi} X$. These two inclusions imply, without loss of generality, that $\phi = \varphi^{-1}$, that $\phi\circ \varphi = \varphi \circ \phi$ is the identity map. This means, in particular, that $f(X_0) \cong g(Y_0)$. Hence, $(X,f) \cong (Y,g)$. Also, since the Euclidean metric is such that $d(f(v),g(w)) \geq 0$ for any vertices $v \in X$ and $w\in Y$, it follows that the directed distance, 
$ \dsp \vec{d}((X,f),(Y,g)) = \max\limits_{k} \max\limits_{\substack{\sigma \in X\\ \dim{\sigma}=k} }\min\limits_{\substack{\tau \in Y\\ \dim{\tau}=k}} \max\limits_{v\in \sigma} \min\limits_{w \in \tau} d(f(v),g(w))$ is non-negative. Therefore, the simplicial Hausdorff distance is positive definite. That is,
\bes
\delta\Big((X,f),(Y,g)\Big) = \max \Bigl\{\vec{d}((X,f),(Y,g)), \: \vec{d}((Y,g), (X,f))\Bigr\} \geq 0.
\ees
\end{proof}

\begin{proposition}\label{main2}
$\delta$ is symmetric. 
\end{proposition}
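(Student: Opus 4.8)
The plan is to derive symmetry immediately from the definition of $\delta$, exploiting the fact that $\delta$ is constructed as the maximum of two directed distances and that the binary $\max$ operation is commutative in its arguments. This mirrors exactly the way symmetry is obtained for the classical Hausdorff distance from its directed counterpart, so no new machinery is required beyond what the definition already supplies.

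First I would write out both $\delta\big((X,f),(Y,g)\big)$ and $\delta\big((Y,g),(X,f)\big)$ directly from the definition of the simplicial Hausdorff distance. The former expands to $\max\{\vec{d}((X,f),(Y,g)),\ \vec{d}((Y,g),(X,f))\}$, while the latter expands to $\max\{\vec{d}((Y,g),(X,f)),\ \vec{d}((X,f),(Y,g))\}$. These two expressions are maxima taken over precisely the same pair of real numbers, merely listed in the opposite order. Since $\max\{a,b\}=\max\{b,a\}$ for all $a,b\in\R$, the two expressions coincide, and therefore $\delta\big((X,f),(Y,g)\big)=\delta\big((Y,g),(X,f)\big)$, which is exactly the claim.

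The one point worth emphasizing, rather than a genuine obstacle, is that the underlying directed distance $\vec{d}$ is itself not symmetric: the $\e$-closeness relation fails to be symmetric, as the asymmetric example in Figure~\ref{fig8} illustrates, so in general $\vec{d}((X,f),(Y,g))\neq\vec{d}((Y,g),(X,f))$. Consequently, symmetry of $\delta$ does not come for free from the directed distances; it is precisely the symmetrization by the outer $\max$ that restores it. I would make this explicit in the write-up so the reader sees that the result is a structural feature of the $\max$-symmetrization and not an accidental coincidence. Notably, the argument never needs to unfold the nested $\max$/$\min$ over the participating simplices from Corollary~\ref{cor1}, since the reasoning operates entirely at the level of the two-element maximum.
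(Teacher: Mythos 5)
Your proof is correct and follows essentially the same argument as the paper: symmetry of $\delta$ is immediate from the commutativity of the two-element maximum applied to the pair of directed distances. Your added remark that $\vec{d}$ itself is not symmetric, so that the outer $\max$ is precisely what restores symmetry, is a worthwhile clarification but does not change the substance of the argument.
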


\begin{proof}
$ $\newline
The symmetry axiom is satisfied since the maximum member of a (finite) set is agnostic of the order of the elements in the set. That is,
\begin{eqnarray*}
    \delta\Big((X,f),(Y,g)\Big) &= \max \Bigl\{\vec{d}((X,f),(Y,g)), \: \vec{d}((Y,g), (X,f))\Bigr\}\\
    &=\max \Bigl\{\vec{d}((Y,g), (X,f)), \: \vec{d}((X,f),(Y,g))\Bigr\}.
\end{eqnarray*}
\end{proof}

To show the triangle inequality, we need other definitions and results.

\begin{definition}
Let $(X,f)$ and $(Y,g)$ be from the class $\X^d$. We say that a $k$-simplex $\sigma$ of $(X,f)$ is $\epsilon$-close to another $k$-simplex $\tau$ of $(Y,g)$ if the condition holds that for every vertex $v \in \sigma$, there exists a vertex $w\in \tau$ such that $d\left(f(v),g(w)\right)<\epsilon.$
\end{definition}

\begin{lemma}\label{e}
Consider three pairs $(X,f), \: (Y,g)$ and $(Z,h)$ from the class $\mathbb{X}^d$.  If $\sigma$, a $k$-simplex of $X$, is $\e$-close to $\t$ a $k$-simplex of $Y$ and $\t$ is $\e'$-close to another $k$-simplex, $\mu$, of $Z$, then $\s$ is $(\e+\e')$-close to $\mu$.
\end{lemma}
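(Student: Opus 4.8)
The plan is to carry out a direct triangle-inequality chase at the level of vertices, using the definition of a simplex being $\epsilon$-close together with the triangle inequality for the Euclidean metric $d$ on $\mathbb{R}^d$. The key structural observation is that the defining condition ``$\sigma$ is $\epsilon$-close to $\tau$'' is a statement of the shape ``for every vertex of $\sigma$, there exists a vertex of $\tau$ $\dots$''. Composing two such statements therefore only requires feeding the existential witness produced by the first relation into the universal quantifier of the second.

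First I would fix an arbitrary vertex $v \in \sigma$. Since $\sigma$ is $\epsilon$-close to $\tau$, there exists a vertex $w \in \tau$ with $d(f(v), g(w)) < \epsilon$. Next, because $\tau$ is $\epsilon'$-close to $\mu$ and $w$ is in particular a vertex of $\tau$, I may apply the closeness condition to this specific $w$ to obtain a vertex $x \in \mu$ with $d(g(w), h(x)) < \epsilon'$.

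I would then invoke the triangle inequality in $\mathbb{R}^d$ to conclude
\[
d(f(v), h(x)) \leq d(f(v), g(w)) + d(g(w), h(x)) < \epsilon + \epsilon'.
\]
Since $v$ was an arbitrary vertex of $\sigma$, and each such $v$ yields a vertex $x \in \mu$ satisfying $d(f(v), h(x)) < \epsilon + \epsilon'$, the simplex $\sigma$ is $(\epsilon + \epsilon')$-close to $\mu$ by definition.

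I do not anticipate any genuine obstacle here, as the argument is a routine metric estimate. The only point requiring care is the bookkeeping of quantifiers: the witness $w$ extracted from the first closeness relation must be valid input to the second, which is legitimate precisely because the $\epsilon'$-closeness of $\tau$ to $\mu$ ranges over \emph{every} vertex of $\tau$, and $w$ is one of them. Notably, no compatibility is required among the vertices $w$ (or $x$) chosen for different $v$, since the conclusion is once again a per-vertex existential statement, so the chase can be performed independently for each vertex of $\sigma$.
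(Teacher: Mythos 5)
Your proposal is correct and is essentially identical to the paper's own proof: both fix an arbitrary vertex $v \in \sigma$, extract a witness $w \in \tau$ from the first closeness relation, feed it into the second to obtain a vertex of $\mu$, and finish with the triangle inequality for the Euclidean metric. Your additional remark on the quantifier bookkeeping is a fair clarification but does not change the argument.
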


\begin{proof}
$ $ \newline
Given $\s, \: \t$ and $\mu$ satisfying the above hypothesis, then for any vertex $v \in \s$, there exists a vertex $w\in\t$ with $d\left(f(v),g(w)\right)<\epsilon$. For this $w\in\t$, there is a $u\in\mu$ with $d\left(g(w),h(u)\right)<\e'.$ By the triangle inequality property of the Euclidean metric, it follows that 
$$d(f(v),h(u)) \leq  d\left(f(v),g(w)\right) + d\left(g(w),h(u)\right)<\epsilon + \epsilon'$$
which proves that $\s$ is $(\e+\e')$-close to $\mu$.
\end{proof}

\begin{lemma}\label{ec}
 If $X$ is $\e$-close to $Y$ and $Y$ is $\e'$-close to $Z$, then $X$ is $(\e+\e')$-close to $Z$.
\end{lemma}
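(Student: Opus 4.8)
The plan is to lift the simplex-level transitivity already established in Lemma~\ref{e} up to the level of entire complexes, using the intermediate complex $Y$ (more precisely, an intermediate $k$-simplex living in $Y$) as the bridge. The key preliminary observation is that the complex-level notion of $\e$-closeness in the definition of $\e$-closeness unwinds exactly into the simplex-level notion: to say that $X$ is $\e$-close to $Y$ is precisely to say that \emph{for every} $k$-simplex $\s$ in $X$ there \emph{exists} a $k$-simplex $\t$ in $Y$ such that $\s$ is $\e$-close to $\t$ in the per-simplex sense. Once this reformulation is in hand, Lemma~\ref{e} does essentially all the work, and the remaining argument is quantifier chaining.

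The steps I would carry out, in order, are as follows. First, fix an arbitrary $k$ and an arbitrary $k$-simplex $\s$ in $X$. Second, invoke the hypothesis that $X$ is $\e$-close to $Y$: this produces a $k$-simplex $\t$ in $Y$ to which $\s$ is $\e$-close. Third, feed \emph{this particular} $\t$ into the hypothesis that $Y$ is $\e'$-close to $Z$, since $\t$ is itself a $k$-simplex of $Y$; this yields a $k$-simplex $\mu$ in $Z$ to which $\t$ is $\e'$-close. Fourth, apply Lemma~\ref{e} to the chain $\s \to \t \to \mu$ to conclude that $\s$ is $(\e+\e')$-close to $\mu$. Finally, since $\s$ (and $k$) were arbitrary, I would observe that I have exhibited, for every $k$-simplex $\s$ in $X$, a $k$-simplex $\mu$ in $Z$ to which $\s$ is $(\e+\e')$-close, which is exactly the definition of $X$ being $(\e+\e')$-close to $Z$.

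The one point requiring genuine care, and hence the main obstacle, is the bookkeeping of the existential quantifiers in step three: the simplex $\t$ is not chosen freely but is \emph{handed to us} by the $\e$-closeness of $X$ to $Y$, and it is crucial that the $\e'$-closeness hypothesis on $Y$ to $Z$ is then applied to that same $\t$ rather than to some unrelated simplex of $Y$. Because $\e$-closeness is not symmetric (as emphasized in the discussion following the definition), the direction of each closeness relation must be tracked precisely so that the source simplex of one step coincides with the target simplex of the previous one. Beyond this alignment of quantifiers there is nothing delicate: the dimension $k$ is preserved at every stage, and the additivity of the closeness constant is inherited directly from the triangle inequality already used in Lemma~\ref{e}.
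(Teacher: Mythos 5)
Your proposal is correct and follows essentially the same route as the paper's own proof: unwind the complex-level definition of closeness, pass the intermediate $k$-simplex $\t$ of $Y$ produced by the first hypothesis into the second hypothesis, and invoke Lemma~\ref{e} to conclude. Your explicit attention to the quantifier bookkeeping (applying the $\e'$-closeness of $Y$ to $Z$ to the \emph{same} $\t$ handed over by the first step) is exactly the point the paper's proof relies on, just stated more carefully.
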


\begin{proof}
$ $ \newline
By definition, that $X$ is $(\e+\e')$-close to $Z$ means that for each $k$-simplex $\s\in X$, there exists a $k$-simplex $\mu$ of $Z$ such that $\s$ is $(\e+\e')$-close to $\mu$. Now, given any $k$-simplex $\s \in X$, there exists a $k$-simplex $\t \in Y$ such that $\s$ is $\e$-close to $\t$, and there exists a $k$-simplex $\mu \in Z$ such that $\t$ is $\e'$-close to $\mu$. By lemma \ref{e}, it follows that $\s$ is $(\e+\e')$-close to $\mu$. Therefore $X$ is $(\e+\e')$-close to $Z$.
\end{proof}

To prove the next proposition, we need the following standard result from real analysis \cite{baby1} about the infimum of subsets of real numbers:

\begin{theorem}\label{inf}
Let $A$ and $B$ be non-empty bounded sets in $\R$. Then, $\inf (A+B)= \inf A + \inf B.$
\end{theorem}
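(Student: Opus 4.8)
The plan is to prove the two inequalities $\inf(A+B) \geq \inf A + \inf B$ and $\inf(A+B) \leq \inf A + \inf B$ separately, where $A+B$ denotes the Minkowski sum $\{a+b : a \in A, b \in B\}$. First I would record the existence of all the quantities in play: because $A$ and $B$ are non-empty and bounded, the completeness of $\R$ guarantees that $\alpha := \inf A$ and $\beta := \inf B$ exist as real numbers, and $A+B$ is non-empty and bounded below (in fact by $\alpha+\beta$, as the next step will show), so $\inf(A+B)$ is likewise a well-defined real number.

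For the first inequality, I would take an arbitrary element of $A+B$, write it as $a+b$ with $a \in A$ and $b \in B$, and use the lower-bound property $a \geq \alpha$ together with $b \geq \beta$ to conclude $a+b \geq \alpha+\beta$. Since $a+b$ ranges over all of $A+B$, this shows $\alpha+\beta$ is a lower bound for $A+B$, and therefore $\inf(A+B) \geq \alpha+\beta$ because the infimum is the greatest lower bound.

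For the reverse inequality, I would invoke the approximation property of the infimum. Fixing $\epsilon > 0$, the definition of the greatest lower bound yields some $a \in A$ with $a < \alpha + \tfrac{\epsilon}{2}$ and some $b \in B$ with $b < \beta + \tfrac{\epsilon}{2}$. Then $a+b \in A+B$ satisfies $a+b < \alpha+\beta+\epsilon$, whence $\inf(A+B) \leq a+b < \alpha+\beta+\epsilon$. Since this holds for every $\epsilon>0$, it forces $\inf(A+B) \leq \alpha+\beta$. Combining the two inequalities gives $\inf(A+B) = \alpha+\beta$, as desired.

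The only real subtlety, and the step I would treat most carefully, is the reverse inequality: one must select the near-minimizers in $A$ and $B$ \emph{independently} and split the tolerance $\epsilon$ between them (taking $\tfrac{\epsilon}{2}$ for each), rather than hoping that a single element of $A+B$ automatically lands within $\epsilon$ of $\alpha+\beta$. Everything else is a direct unwinding of the definitions of lower bound and infimum.
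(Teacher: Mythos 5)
Your proof is correct: the lower-bound direction and the $\epsilon/2$-approximation argument for the reverse inequality together give the standard, complete proof of this fact. Note that the paper does not prove this statement at all --- it quotes it as a known result from a real analysis text (\cite{baby1}) --- so your argument is precisely the canonical one the citation points to, and there is nothing in the paper to diverge from.
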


\begin{proposition}\label{main3}
$\delta$ satisfies the triangle inequality.
\end{proposition}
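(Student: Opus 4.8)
The plan is to prove the triangle inequality first for the directed distance $\vec{d}$ and then promote it to $\delta$ using symmetry (Proposition \ref{main2}) together with the elementary behaviour of the maximum. Fix three pairs $(X,f)$, $(Y,g)$ and $(Z,h)$ in $\X^d$, and abbreviate them as $X$, $Y$, $Z$ as in the standing convention. For the directed step I would introduce the three sets
\[
A=\{\e>0 : X \text{ is } \e\text{-close to } Y\},\qquad B=\{\e>0 : Y \text{ is } \e\text{-close to } Z\},\qquad C=\{\e>0 : X \text{ is } \e\text{-close to } Z\},
\]
so that by definition $\vec{d}((X,f),(Y,g))=\inf A$, $\vec{d}((Y,g),(Z,h))=\inf B$ and $\vec{d}((X,f),(Z,h))=\inf C$. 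Because the complexes are finite, Proposition \ref{2-6} identifies $\e$-closeness with the strict inequality of a single finite max--min quantity, so each of these sets is a nonempty ray of the form $(M,\infty)$ with $M\geq 0$; in particular all three are nonempty and bounded below, and their infima exist.

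The key observation is that Lemma \ref{ec} says precisely that $A+B\subseteq C$: if $\e\in A$ and $\e'\in B$, then $X$ is $\e$-close to $Y$ and $Y$ is $\e'$-close to $Z$, whence $X$ is $(\e+\e')$-close to $Z$, i.e.\ $\e+\e'\in C$. Since $C\supseteq A+B$, we obtain $\inf C\leq \inf(A+B)$, and Theorem \ref{inf} evaluates the right-hand side as $\inf(A+B)=\inf A+\inf B$. Chaining these gives
\[
\vec{d}\big((X,f),(Z,h)\big)=\inf C\leq \inf A+\inf B=\vec{d}\big((X,f),(Y,g)\big)+\vec{d}\big((Y,g),(Z,h)\big),
\]
which is the directed triangle inequality.

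Finally I would assemble the full statement. Applying the directed bound in both directions, and using that $\vec{d}\leq\delta$ by definition of the maximum together with the symmetry $\delta((Y,g),(Z,h))=\delta((Z,h),(Y,g))$ from Proposition \ref{main2}, each of $\vec{d}((X,f),(Z,h))$ and $\vec{d}((Z,h),(X,f))$ is bounded above by $\delta((X,f),(Y,g))+\delta((Y,g),(Z,h))$. Since $\delta((X,f),(Z,h))$ is the larger of these two directed distances, the same bound holds for it, giving the desired inequality.

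The step I expect to be the main obstacle is the careful bookkeeping around the infima. One must confirm, via Proposition \ref{2-6} and finiteness, that $A$, $B$ and $C$ are nonempty and bounded below so that Theorem \ref{inf} is applicable (the theorem is stated for bounded sets, and one should check that boundedness below suffices for the additivity of the infimum, or restrict to a common bounded truncation). One must also verify that the strict inequality in the definition of $\e$-closeness, which makes each of $A,B,C$ an open ray rather than a closed one, does not obstruct the inclusion $A+B\subseteq C$ — it does not, since Lemma \ref{ec} propagates the strict bound. Everything else is a routine consequence of Lemma \ref{ec} and the order-agnosticism of the maximum.
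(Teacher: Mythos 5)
Your proposal follows essentially the same route as the paper's own proof: the same three sets of admissible $\epsilon$ values (the paper calls them $S$, $T$, $V$), the same key inclusion $S+T\subseteq V$ obtained from Lemma \ref{ec}, the same appeal to Theorem \ref{inf} to pass to infima, and the same final assembly bounding both directed distances by $\delta\big((X,f),(Y,g)\big)+\delta\big((Y,g),(Z,h)\big)$ and taking the maximum. If anything, your treatment is slightly more careful than the paper's: the paper asserts that $S$, $T$, $V$ are bounded, whereas they are upward-unbounded rays of the form $(M,\infty)$, so your observation that Theorem \ref{inf} as stated requires boundedness and that one must either verify additivity of infima for sets bounded below or truncate is a legitimate refinement rather than a deviation.
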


\begin{proof}
$ $\newline
Consider any three pairs $(X,f), \: (Y,g)$ and $(Z,h)$ from the class $\mathbb{X}^d$ satisfying the conditions of lemma \ref{ec}. We define the following sets $S:= \Bigl\{\epsilon>0: X \text{ is }\epsilon\text{-close to } Y\Bigr\}$, $T:= \Bigl\{\lambda>0: Y \text{ is }\lambda\text{-close to } Z\Bigr\}$, and $V:=\Bigl\{\varepsilon>0: X \text{ is }\varepsilon\text{-close to } Z\Bigr\}$. Observe that $S$, $T$, and $V$ are non-empty bounded subsets of $\R$. Let $S + T$ be defined to be $\Bigl\{ s+t: s\in S, t\in T\Bigl\}$. Then, lemma \ref{ec} implies that $S + T \subseteq V$. Take the infimum of both sides of the subset relation and use theorem \ref{inf}, to obtain the reverse-direction inequality $\inf V \leq \inf S + \inf T$. That is, $\vec{d}((X,f),(Z,h)) = \inf V$ is bounded above by $\vec{d}((X,f),(Y,g)) + \vec{d}((Y,g),(Z,h))$, which is, in turn, less than or equal to the sum $\delta\Big((X,f),(Y,g)\Big) + \delta\Big((Y,g),(Z,h)\Big)$ by the definition of $\delta$ as a maximum. 

By a similar argument, it can be shown that the directed distance $\vec{d}((Z,h),(X,f)) \leq \delta\Big((X,f),(Y,g)\Big) + \delta\Big((Y,g),(Z,h)\Big)$. Since each distance $\vec{d}((X,f),(Z,h))$ and $\vec{d}((Z,h),(X,f))$ is bounded above by the sum $\delta\Big((X,f),(Y,g)\Big) + \delta\Big((Y,g),(Z,h)\Big)$, then so is their maximum. Hence, it follows that
$$\delta\Big((X,f),(Z,h)\Big)\leq \delta\Big((X,f),(Y,g)\Big) + \delta\Big((Y,g),(Z,h)\Big).$$
\end{proof}

Now the main theorem of this chapter states:

\begin{theorem}\label{mainthm}
$\delta$ is an \textit{extended} metric.
\end{theorem}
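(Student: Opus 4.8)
The plan is to obtain Theorem \ref{mainthm} as an immediate consequence of the three axioms already verified above, together with a clarification of why the modifier \emph{extended} is the correct one. First I would recall that an \emph{extended metric} on a set is a map taking values in $[0,+\infty]$ (rather than in $\R_{\geq 0}$) that is non-negative, vanishes exactly on coincident points, is symmetric, and satisfies the triangle inequality. Here the role of ``coincident points'' is played by isomorphic pairs in $\X^d$, so the coincidence axiom is read as $\delta\big((X,f),(Y,g)\big)=0$ if and only if $(X,f)\cong(Y,g)$. With that framing fixed, the proof is assembled from the earlier results: Proposition \ref{main1} supplies non-negativity together with the fact that $\delta=0$ characterizes isomorphism; Proposition \ref{main2} supplies symmetry; and Proposition \ref{main3} supplies the triangle inequality.

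Strictly speaking the object so obtained is an extended metric on the quotient of $\X^d$ by isomorphism, and I would state this explicitly so that the coincidence axiom is a genuine identity-of-indiscernibles statement rather than merely a pseudometric condition. This is the only place where the passage from ``pseudometric'' to ``metric'' must be argued, and it is handled entirely by Lemma \ref{2} and Proposition \ref{main1}: a vanishing directed distance forces a simplicial inclusion respecting $f$ and $g$, and two such mutual inclusions force an isomorphism.

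The one point that requires real care, and the actual reason for the word \emph{extended}, is that a directed distance can equal $+\infty$. This occurs precisely when $X$ contains a $k$-simplex for some dimension $k$ at which $Y$ has no simplex at all: in $\vec{d}\big((X,f),(Y,g)\big)=\max_k\max_{\dim\s=k}\min_{\dim\t=k}\max_{v\in\s}\min_{w\in\t} d(f(v),g(w))$ the inner minimum $\min_{\dim\t=k}$ then ranges over the empty index set and is taken to be $+\infty$ by convention; equivalently, $X$ is never $\e$-close to $Y$ for any finite $\e$, so the set $\{\e>0 : X \text{ is } \e\text{-close to } Y\}$ is empty and its infimum is $+\infty$. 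I would therefore check that each axiom survives in $[0,+\infty]$: non-negativity and symmetry are unaffected, and the triangle inequality still holds because whenever a summand on the right-hand side is $+\infty$ the inequality is trivial, while on the locus where the relevant closeness sets are non-empty Proposition \ref{main3} (through Theorem \ref{inf}) applies verbatim.

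I do not expect a genuine obstacle, since all the substantive work lives in the preceding lemmas and propositions; the main thing to get right is the bookkeeping around the empty-index and infinite-value conventions, so that Proposition \ref{main3} remains valid when one of the sets $S$, $T$, or $V$ appearing in its proof is empty. Concretely, I would simply note that if $\vec{d}((X,f),(Z,h))=+\infty$ then the inequality to be proved is either vacuous or forces one of the intermediate distances to be $+\infty$ as well (a dimension present in $X$ but absent from $Z$ must be absent from at least one of $X$ through $Y$ or $Y$ through $Z$), and that otherwise the original finite argument applies. This last observation is exactly the content that upgrades the statement from ``metric'' to ``extended metric.''
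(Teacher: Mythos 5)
Your proposal is correct and takes essentially the same route as the paper: Theorem \ref{mainthm} is assembled directly from Propositions \ref{main1}, \ref{main2}, and \ref{main3}, with the modifier \emph{extended} justified by the observation that the directed distance becomes infinite when one complex has simplices in a dimension the other lacks (the empty min/infimum convention). Your additional bookkeeping---checking that the triangle inequality survives infinite values because a dimension present in $X$ but absent from $Z$ forces at least one of the two intermediate directed distances to be infinite---is in fact more careful than the paper's own proof, which merely remarks that the extremum of the empty set is infinite without re-verifying the axioms over $[0,+\infty]$.
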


\begin{proof}
The above propositions \ref{main1}, \ref{main2}, and \ref{main3} altogether prove the metric axioms. Note also, that $\delta$ can have an infinite value in the case where $(X,f)$ and $(Y,g)$ have different dimensions, since the maximum/minimum value of the empty set is infinite. Since $\delta$ takes on values in $[0,\infty) \cup \{-\infty,\infty\}$, it is an \emph{extended} metric.
\end{proof}

Next, we present results about the behavior of the simplicial Hausdorff distance for some special cases of simplicial complexes. We consider $X$ as a single vertex, then $X$ as a discrete set of vertices, then $X$ and $Y$ as both discrete, and finally, when $Y$ is a standard $n$-simplex, with $n>0$.

\begin{proposition}\label{prop1}
If $(X,f)$ and $(Y,g)$ are both discrete, then
$$\dsp \vec{d}\left((X,f),(Y,g)\right)= \vec{d}_H(f(X),g(Y)).$$
\end{proposition}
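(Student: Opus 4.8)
The plan is to invoke the algorithmic reformulation of the directed distance from Corollary \ref{cor1} and specialize it to the discrete setting. Recall that a discrete complex carries no simplices of positive dimension: its only simplices are its vertices, that is, the $0$-simplices. I would state this interpretation of ``discrete'' explicitly at the outset, so that in the expression
$$\vec{d}\left((X,f),(Y,g)\right)=\max_{k} \max_{\substack{\sigma \in X\\ \dim{\sigma}=k} }\min_{\substack{\tau \in Y\\ \dim{\tau}=k}} \max_{v\in \sigma} \min_{w \in \tau} d(f(v),g(w))$$
the outer maximum over $k$ reduces to the single term $k=0$, since for $k \geq 1$ neither $X$ nor $Y$ contains any $k$-simplex.

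First I would collapse the two inner extrema. When $k=0$, every simplex $\sigma \in X$ is a singleton vertex $\{v\}$ and every simplex $\tau \in Y$ is a singleton $\{w\}$; hence $\max_{v \in \sigma}$ and $\min_{w \in \tau}$ are each taken over a one-element set and disappear. What remains is
$$\vec{d}\left((X,f),(Y,g)\right) = \max_{v \in X_0} \min_{w \in Y_0} d(f(v),g(w)).$$
Next I would match this against the classical formula. By definition $\vec{d}_H(f(X),g(Y)) = \sup_{a \in f(X)} \inf_{b \in g(Y)} d(a,b)$, and, interpreting $f(X)$ as the point cloud $f(X_0)$ and writing $a=f(v)$, $b=g(w)$, this equals $\sup_{v \in X_0} \inf_{w \in Y_0} d(f(v),g(w))$. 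Because $X_0$ and $Y_0$ are finite and nonempty, the supremum and infimum are attained and coincide with the maximum and minimum, respectively, yielding precisely the displayed expression and establishing the equality.

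The computation is essentially a matter of unwinding definitions; the only points requiring care---and the closest thing to an obstacle---are the bookkeeping that justifies the disappearance of the inner $\max_{v\in\sigma}$ and $\min_{w\in\tau}$ in the discrete case, together with the reduction of $\sup$ and $\inf$ to $\max$ and $\min$ that relies on the finiteness of the vertex sets. I would be careful to note the identification of $f(X)$ with $f(X_0)$, since $f$ is defined only on vertices, so that the left- and right-hand quantities are being compared over exactly the same indexing sets.
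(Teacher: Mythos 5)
Your proposal is correct and follows essentially the same route as the paper's own proof: specialize the $\max$-$\min$ formula of Corollary \ref{cor1} to $k=0$, collapse the singleton inner extrema, and identify the result with the classical directed Hausdorff distance $\sup_{v}\inf_{w} d(f(v),g(w))$ using finiteness of the vertex sets. Your explicit bookkeeping (identifying $f(X)$ with $f(X_0)$ and justifying $\sup/\inf = \max/\min$) only makes the same argument slightly more careful than the paper's version.
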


\begin{proof}
$ $\newline
By definition, $\vec{d}\left((X,f),(Y,g)\right)=\max\limits_{k} \max\limits_{\substack{\sigma \in X\\ \dim{\sigma}=k} }\min\limits_{\substack{\tau \in Y\\ \dim{\tau}=k}}\max\limits_{v\in\sigma}\min\limits_{w\in\tau}d\left(f(v),g(w)\right)$. Since $X$ and $Y$ both only consist of vertices, there is only the case $k=0$ which means $\vec{d}\left((X,f),(Y,g)\right)=\max\limits_{v \in X_0}\min\limits_{w \in Y_0}d\left(f(v),g(w)\right)$, which is equal to $\sup\limits_{v \in X}\inf\limits_{w \in Y}d\left(f(v),g(w)\right)$ with $X$ and $Y$ as finite and discrete sets. This may be re-written as $\sup\limits_{v \in X}\inf\limits_{w \in Y} \lVert f(v)-g(w)\rVert$, with $\lVert \cdot \rVert$ being the norm induced by the Euclidean metric, which is exactly $\vec{d}_H(f(X), g(Y))$ with the directed Hausdorff distance in the classical sense.

\end{proof}

\begin{proposition}[Stability]
 For $(X,f)$ in $\mathbb{X}^d$, $\delta\Big((X,f),(X,g)\Big) \leq \lVert f-g\rVert_{\infty}$ for a perturbation $g$ of $f$. 
\end{proposition}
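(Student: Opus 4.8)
The plan is to exploit the fact that both pairs share the \emph{same} underlying abstract complex $X$, so that every $k$-simplex of $(X,f)$ has a canonical counterpart among the $k$-simplices of $(X,g)$, namely itself. Using the algorithmic formula from Corollary \ref{cor1}, I would bound each directed distance by making these ``identity'' choices in the nested minima and maxima, which collapses the whole expression to a pointwise comparison of $f$ and $g$.

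First I would expand $\vec{d}\left((X,f),(X,g)\right) = \max\limits_{k}\max\limits_{\sigma}\min\limits_{\tau}\max\limits_{v\in\sigma}\min\limits_{w\in\tau} d(f(v),g(w))$, where $\sigma$ and $\tau$ range over the $k$-simplices of $X$. Fixing a $k$-simplex $\sigma$, I note that $\sigma$ is itself a $k$-simplex in $(X,g)$, so the minimum over $\tau$ is bounded above by the value at $\tau=\sigma$, giving $\min\limits_{\tau}\max\limits_{v\in\sigma}\min\limits_{w\in\tau} d(f(v),g(w)) \le \max\limits_{v\in\sigma}\min\limits_{w\in\sigma} d(f(v),g(w))$. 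Then, for each vertex $v\in\sigma$, I would take $w=v$ (which lies in $\sigma$) in the innermost minimum, so that $\min\limits_{w\in\sigma} d(f(v),g(w)) \le d(f(v),g(v)) = \lVert f(v)-g(v)\rVert \le \lVert f-g\rVert_{\infty}$. Since this bound is uniform in $v$, taking the maximum over $v\in\sigma$ and then over all $\sigma$ and all dimensions $k$ yields $\vec{d}\left((X,f),(X,g)\right) \le \lVert f-g\rVert_{\infty}$.

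Running the identical argument with the roles of $f$ and $g$ interchanged, and using $\lVert g-f\rVert_{\infty}=\lVert f-g\rVert_{\infty}$, gives $\vec{d}\left((X,g),(X,f)\right)\le \lVert f-g\rVert_{\infty}$; taking the maximum of the two directed distances then delivers $\delta\Big((X,f),(X,g)\Big)\le \lVert f-g\rVert_{\infty}$. The argument is essentially mechanical once the identity matchings $\tau=\sigma$ and $w=v$ are used, so I do not expect a genuine obstacle. The only point requiring care is the interpretation of $\lVert f-g\rVert_{\infty}$ as $\sup_{v\in X_0}\lVert f(v)-g(v)\rVert$, a supremum over the common vertex set $X_0$; it is precisely this reading that legitimizes the pointwise bound $d(f(v),g(v))\le \lVert f-g\rVert_{\infty}$. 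It is also worth remarking that since $(X,f)$ and $(X,g)$ have the same dimension, the quantity $\delta\Big((X,f),(X,g)\Big)$ is finite, so the stated inequality is non-vacuous.
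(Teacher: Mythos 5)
Your proof is correct, and it takes a genuinely different---and in fact sounder---route than the paper's own argument. You bound each nested minimum by exhibiting a witness: since both pairs share the same underlying complex $X$, you take $\tau=\sigma$ in the minimum over $\tau$ and then $w=v$ in the minimum over $w$, which collapses the directed distance to $\max_{v\in X_0} d(f(v),g(v)) = \lVert f-g\rVert_{\infty}$, and you treat the reverse directed distance by the symmetric argument. The paper instead bounds each minimum by the corresponding \emph{maximum}, arriving at the all-maxima expression $\max_{k}\max_{\sigma}\max_{\tau}\max_{v\in\sigma}\max_{w\in\tau} d\left(f(v),g(w)\right)$, and then asserts that this reduces to $\sup_{v\in X_0}\lvert f(v)-g(v)\rvert$. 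That intermediate bound is actually too weak to yield the claim: the all-maxima quantity contains cross terms $d(f(v),g(w))$ with $v\neq w$, which can far exceed $\lVert f-g\rVert_{\infty}$ (take $g=f$ on a complex with two distant vertices; the all-maxima expression is strictly positive while $\lVert f-g\rVert_{\infty}=0$), so the paper's final step from that bound down to $\max_{v\in X_0} d(f(v),g(v))$ is not justified as written. Your witness-selection argument is therefore not merely an alternative; it supplies precisely the step that makes the stability bound go through, and it also makes explicit the point---left implicit in the paper---that the two directed distances require separate but symmetric treatment.
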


\begin{proof}
    By definition, $\vec{d}\left((X,f),(Y,g)\right)=\max\limits_{k} \max\limits_{\substack{\sigma \in X\\ \dim{\sigma}=k} }\min\limits_{\substack{\tau \in Y\\ \dim{\tau}=k}}\max\limits_{v\in\sigma}\min\limits_{w\in\tau}d\left(f(v),g(w)\right)$. Noting that higher-dimensional simplices have larger number of participating vertices, and that each minimum in the expression is bounded above by the maximum of the same set, we obtain this upper bound for the directed distance: $\max\limits_{k} \max\limits_{\substack{\sigma \in X\\ \dim{\sigma}=k} }\max\limits_{\substack{\tau \in Y\\ \dim{\tau}=k}}\max\limits_{v\in\sigma}\max\limits_{w\in\tau}d\left(f(v),g(w)\right)$. The highest dimensional simplex in $X$ is one of its subcomplexes, and this yields a better upper bound on the directed distance, $\max\limits_{v\in X_0}$, which can be re-written as $\sup\limits_{v\in X_0}\lvert f(v)-g(v)\rvert$, which is equal to $\lVert f-g\rVert_{\infty}$.
\end{proof}

\subsection{Computational complexity of the simplicial complex distance}\label{complexity}
Recall that $\delta$ is defined as $ \dsp \delta\Big((X,f),(Y,g)\Big) = \max \Bigl\{\vec{d}((X,f),(Y,g)), \: \vec{d}((Y,g), (X,f))\Bigr\}$, where the directed distance $\vec{d}\left((X,f),(Y,g)\right)$ is equal to $\max\limits_{\sigma \in X} \min\limits_{\tau \in Y} \max\limits_{v\in \sigma} \min\limits_{w \in \tau} d(f(v),g(w))$. This algorithm (presented and discussed in chapter $4$) involves a sequence of steps.  Suppose $X_0$ has $n$ vertices and $Y_0$ has $m$ vertices. Let $\lvert X\rvert$ and $\lvert Y\rvert$ denote the dimensions of the simplicial complexes $(X,f)$ and $(Y,g)$, respectively. Then $(X,f)$ can consist of at most $n^{\lvert X\rvert +1}$ simplices. Similarly, $(Y,g)$ has a maximum of $m^{\lvert Y\rvert+1}$ simplices. 

\begin{proposition}
Denote $\displaystyle D=\max\{\lvert X \rvert, \lvert Y\rvert\}$, and $p = \max\{n,m\}$. Then, the simplicial Hausdorff distance, $\delta$, runs at most in polynomial time, $O(p^{2D+2})$.
\end{proposition}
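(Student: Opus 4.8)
The plan is to read the running time directly off the nested quantifier structure of the directed distance, and then account for the single outermost maximum that assembles $\delta$. Recall that computing $\delta\big((X,f),(Y,g)\big)$ amounts to evaluating the two directed distances $\vec{d}((X,f),(Y,g))$ and $\vec{d}((Y,g),(X,f))$ and returning their maximum. Since that final comparison is one constant-time operation, it suffices to bound the cost of a single directed distance and note that performing it twice only multiplies the total by a constant factor.

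First I would treat the four nested quantifiers in $\vec{d}((X,f),(Y,g)) = \max_{\sigma\in X}\min_{\tau\in Y}\max_{v\in\sigma}\min_{w\in\tau} d(f(v),g(w))$ as four nested loops. The outer maximum ranges over all simplices $\sigma$ of $X$, of which there are at most $n^{|X|+1}$ by the count established just above the proposition; for each such $\sigma$, the minimum ranges over the same-dimensional simplices $\tau$ of $Y$, at most $m^{|Y|+1}$ of them. Inside a fixed pair $(\sigma,\tau)$, the maximum runs over the vertices of $\sigma$ (at most $|X|+1$, since a $k$-simplex has $k+1$ vertices) and the minimum over the vertices of $\tau$ (at most $|Y|+1$). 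Each innermost evaluation is a single Euclidean distance $d(f(v),g(w))$ in $\R^d$, computable in $O(d)$ time.

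Multiplying the four loop sizes together with the per-comparison cost, one directed distance takes at most
\[
n^{|X|+1}\cdot m^{|Y|+1}\cdot (|X|+1)\cdot(|Y|+1)\cdot O(d)
\]
operations. Setting $p=\max\{n,m\}$ and $D=\max\{|X|,|Y|\}$ gives $n^{|X|+1}\le p^{D+1}$ and $m^{|Y|+1}\le p^{D+1}$, so the product of the two simplex counts is bounded by $p^{2D+2}$; the surviving factors $(|X|+1)(|Y|+1)\le (D+1)^2$ and the ambient dimension $d$ are of lower order and absorb into the implied constant. Hence each directed distance, and therefore $\delta$ itself, runs in $O(p^{2D+2})$ time, which is polynomial in $p$ for fixed ambient dimension.

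The step I expect to require the most care is the bookkeeping that certifies the dominant term. One must check that the simplex-count product $p^{2D+2}$ genuinely dominates both the per-simplex vertex factors and the cost $d$ of a single distance evaluation, and that imposing the dimension-matching constraint $\dim\sigma=\dim\tau$ does not inflate the count: it only restricts the inner loop over $\tau$, so the crude bound $n^{|X|+1}m^{|Y|+1}$ on the number of simplex pairs still applies. With these observations in place, the stated polynomial bound follows at once.
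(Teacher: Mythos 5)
Your proposal is correct and follows essentially the same route as the paper: both bound the directed distance by the product $n^{\lvert X\rvert+1}\cdot m^{\lvert Y\rvert+1}\cdot(\lvert X\rvert+1)\cdot(\lvert Y\rvert+1)$ of the four nested loop sizes, absorb the lower-order vertex-count factors, and conclude $O(p^{2D+2})$. The only cosmetic differences are that the paper precomputes all pairwise vertex distances in $O(nm)$ time where you charge $O(d)$ per distance evaluation inline, and that you explicitly note the dimension-matching constraint on $\tau$ only shrinks the inner loop---both minor refinements that do not change the argument.
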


\begin{proof}
$ $\newline
First, compute all pairwise Euclidean distances between vertices of $(X,f)$ and $(Y,g)$. Since $(X,f)$ and $(Y,g)$ have only a finite number of vertices, these pairwise distances may be stored in an array. This straightforward algorithm can be achieved in quadratic time, $O(nm)$. For the directed distances, computing the sequence of maxima across all simplices in $(X,f)$, then minimum across all simplices in $(Y,g)$, then maximum over all vertices of a simplex in $(X,f)$ and finally a minimum over vertices of a simplex in $(Y,g)$ will require $ \dsp \left(n^{\lvert X\rvert +1} \right) \cdot \left( m^{\lvert Y \rvert + 1} \right) \cdot \left( \lvert X\rvert +1 \right) \cdot \left( \lvert Y\rvert +1 \right)$ calculations. This approximates to $ \dsp \lvert X\rvert\cdot\lvert Y\rvert\cdot\left(n^{\lvert X\rvert +1}\right)\cdot\left(m^{\lvert Y\rvert+1}\right)$ calculations ignoring lower-order terms. This in turn approximates to $\left(n^{\lvert X\rvert +1}\right)\cdot\left(m^{\lvert Y\rvert+1}\right)$ calculations, up to dilation factor. Finding the maximum of a set with just two elements is feasible in constant time. Therefore, if we denote $\displaystyle D=\max\{\lvert X \rvert, \lvert Y\rvert\}$, and $p = \max\{n,m\}$. Then, the simplicial Hausdorff distance, $\delta$, runs at most in polynomial time, $O(p^{2D+2})$.  
\end{proof}

While higher-dimensional homology has theoretical importance in algebraic topology, practical considerations often limit its relevance in applied topological data analysis research. In many real-world datasets, the topological features of interest are often captured within the first few dimensions of homology. For example, in point cloud data representing a surface, the presence of connected components ($0$-dimensional homology) and loops ($1$-dimensional homology) may be more relevant than higher-dimensional voids or cavities. Computing higher-dimensional homology becomes exponentially more complex as the dimension increases.Beyond dimension $2$, the topological features become increasingly abstract and difficult to interpret in the context of the original data. This reduces the practical utility of higher-dimensional homology for most applications in topological data analysis. As the dimension increases, the effects of noise and sampling become more pronounced, making it challenging to extract meaningful topological features beyond a certain point. Higher-dimensional homology may be more susceptible to noise and may not provide additional insight into the underlying structure of the data. So, practical computations become infeasible beyond a certain dimension, typically around $2$, especially when dealing with large datasets.  Consequently, we can impose an upper bound on the dimensions of $(X,f)$ and $(Y,g)$ without loss of generality, as in the following corollary.

\begin{corollary}\label{cor2} With an upper bound of $2$ on $\lvert X \rvert$ and $\lvert Y\rvert$, the computational complexity improves to $O(p^6)$, where $p$ is the maximum number of vertices in either $X$ or $Y$. 
\end{corollary}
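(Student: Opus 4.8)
The plan is to specialize the bound from the preceding proposition rather than to argue anything anew. That proposition already establishes that $\delta$ is computable in time $O(p^{2D+2})$, where $D = \max\{\lvert X\rvert, \lvert Y\rvert\}$ and $p = \max\{n,m\}$ is the larger of the two vertex counts. The corollary is nothing more than this general estimate evaluated at the dimension cap imposed in its hypothesis.

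First I would record that the assumption $\lvert X\rvert \leq 2$ and $\lvert Y\rvert \leq 2$ forces $D = \max\{\lvert X\rvert, \lvert Y\rvert\} \leq 2$. Next I would substitute this into the exponent $2D+2$: since $2D + 2 \leq 2\cdot 2 + 2 = 6$ and $p \geq 1$, the running time $O(p^{2D+2})$ is dominated by $O(p^6)$. It is worth noting explicitly that we only need $D \leq 2$, not $D = 2$, because $p^{2D+2} \leq p^6$ for every $D \leq 2$ once $p \geq 1$. This yields the claimed complexity $O(p^6)$, with the interpretation of $p$ as the maximum number of vertices in either $X$ or $Y$ carried over unchanged from the proposition.

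There is no genuine obstacle here: the corollary follows by a single numerical substitution into a formula already proved, together with the trivial monotonicity of $p^k$ in the exponent $k$. The only point requiring a word of care is the direction of the inequality on $D$ (an upper cap on dimension yields an upper bound on the exponent, hence on the running time), which I address above; everything else is inherited verbatim from the preceding argument.
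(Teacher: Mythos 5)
Your proposal is correct and matches the paper's intent exactly: the paper states this corollary without a separate proof, treating it as the immediate specialization of the preceding proposition's $O(p^{2D+2})$ bound at the dimension cap $D \leq 2$, which is precisely your substitution $2D+2 \leq 6$. Your extra remark that only $D \leq 2$ (not $D=2$) is needed, via monotonicity of $p^k$ in $k$ for $p \geq 1$, is a harmless and slightly more careful statement of the same one-line argument.
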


Many researchers consider a polynomial time complexity to indicate that the underlying problem is tractable. 

By comparison, in \cite{wang1}, the authors show that the Hausdorff distance between two polygons can be determined in time $O(n\log n)$, where $n$ is the total number of vertices. In \cite{altbook}, the authors show that finding the classical Hausdorff distance between two discrete sets, such as $X_0$ and $Y_0$, in $\R^2$ with $n$ and $m$ elements, can be done in $O((n+m)\log{(n+m)})$ time by use of Voronoi diagrams. This is an improved algorithm as compared with the straightforward algorithm, which has a running time of $O(nm)$. As we have discussed above, calculating the Hausdorff distance is feasible within polynomial time if the two sets can be expressed as simplicial complexes of a fixed dimension. In his doctoral thesis, Godau \cite{godau} demonstrates a method due to Alt et al \cite {altbook} for computing the directed Hausdorff distance between two sets in $\R^d$, each comprised of $n$ and $m$ $k$-dimensional simplices in polynomial time $O(nm^{k+2})$, for constant $k$ and $d$. A collection of $k$-dimensional simplices is not necessarily a simplicial complex. However, taking into account the above discussion on the restriction of the dimension of simplicial complexes, with $k=2$ and $p=\max{\{n,m\}}$, this running time simplifies to $O(p^5)$. Understandably, a slight increase in computational complexity is expected with different dimensions of simplices present in a simplicial complex and without the Voronoi construction as in corollary \ref{cor2}. In theorem $11$ of \cite{hausdorff2}, the authors show that the Hausdorff and directed Hausdorff distances for semi-algebraic sets belong to a class of much higher complexity than $NP$-hardness in the worst-case scenario.

\subsection{Simplicial Hausdorff distance for filtered complexes}
An analogous definition can be made for a filtration of simplicial complexes in the following manner. Consider a filtered complex $(X,f)$ such that for each $\a>0$, $X_{\a}$ is a simplicial subcomplex on the vertex set $X_0$ with index value $\a$ and $f: X_0 \rightarrow \R^d$ a measurement function on the vertex set. Also, $X_{\a} \leq X_{\beta}$ if $\a \leq \beta$. Note that for each $\a>0$, the simplicial complex $X_{\a}$ is in $\X^d$.

\begin{definition}
The simplicial Hausdorff distance between two filtered complexes $(X,f)$ and $(Y,g)$ is defined to be
\begin{eqnarray}\label{filtdist}
\hat{\delta}\Big((X,f),(Y,g)\Big) = \max \Bigl\{\vec{\mathsf{d}}((X,f),(Y,g)), \: \vec{\mathsf{d}}((Y,g), (X,f))\Bigr\}\\
\nonumber \text{ with } \vec{\mathsf{d}}\left((X,f),(Y,g)\right) = \sup\limits_{\a>0} \vec{d}\left((X_{\a},f),(Y_{\a},g)\right)
\end{eqnarray}
\end{definition}

This distance function $\hat{\delta}$ enjoys the same nice properties as the single complex version.

\begin{proposition}\label{filt1}
The simplicial Hausdorff distance for filtered complexes, $\hat{\delta}$, is positive-definite.
\end{proposition}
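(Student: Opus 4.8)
The plan is to reduce the filtered statement to the single-complex positive-definiteness (Proposition \ref{main1}) by working one filtration level at a time. Write $X$ for $(X,f)$ and $Y$ for $(Y,g)$ as before, and recall from the setup that every subcomplex $X_{\a}$ shares the common vertex set $X_0$ (and likewise every $Y_{\a}$ shares $Y_0$), with the measurement functions $f$ and $g$ fixed across the whole filtration. The notion of positive-definiteness to establish is that $\hat{\delta} \geq 0$ always, and that $\hat{\delta}\bigl((X,f),(Y,g)\bigr) = 0$ precisely when the two filtered complexes are isomorphic, meaning $(X_{\a},f) \cong (Y_{\a},g)$ at every level $\a$ in a filtration-compatible way.

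First I would dispatch non-negativity. For each fixed $\a > 0$, the pairs $(X_{\a},f)$ and $(Y_{\a},g)$ both lie in $\X^d$, so by Proposition \ref{main1} the directed distance $\vec{d}\bigl((X_{\a},f),(Y_{\a},g)\bigr)$ is non-negative. Taking the supremum over $\a > 0$ keeps the value in $[0,\infty]$, and the outer maximum of two such non-negative quantities is again non-negative; hence $\hat{\delta}\bigl((X,f),(Y,g)\bigr) \geq 0$.

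Next comes the definiteness equivalence. Since $\hat{\delta}$ is a maximum of two suprema of non-negative terms, $\hat{\delta}\bigl((X,f),(Y,g)\bigr) = 0$ holds if and only if both $\vec{\mathsf{d}}\bigl((X,f),(Y,g)\bigr) = 0$ and $\vec{\mathsf{d}}\bigl((Y,g),(X,f)\bigr) = 0$. Because each $\vec{d}\bigl((X_{\a},f),(Y_{\a},g)\bigr) \geq 0$, the supremum $\sup_{\a>0} \vec{d}\bigl((X_{\a},f),(Y_{\a},g)\bigr)$ vanishes exactly when $\vec{d}\bigl((X_{\a},f),(Y_{\a},g)\bigr) = 0$ for every $\a > 0$. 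Applying this to both directed distances, $\hat{\delta} = 0$ becomes equivalent to the assertion that for every $\a > 0$ both directed distances at level $\a$ vanish; by Lemma \ref{2} (applied in each direction) together with the argument of Proposition \ref{main1}, this is in turn equivalent to $(X_{\a},f) \cong (Y_{\a},g)$ for every $\a > 0$.

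The step I expect to require the most care is the \emph{coherence} of these level-wise isomorphisms: one must check that the isomorphisms obtained separately at each level fit together into a single filtered isomorphism rather than a disconnected family. Here the shared vertex set is decisive. In the proof of Lemma \ref{2} the vertex map $\phi_0 : X_0 \to Y_0$ is pinned down by the single relation $f(v) = g(\phi_0(v))$, which involves only $f$ and $g$ and not $\a$; since $f$ and $g$ are the same functions at every filtration level, the very same $\phi_0$ serves at each $\a$. The induced simplicial map therefore respects the filtration, so the level isomorphisms are automatically compatible and assemble into one filtered isomorphism. Finally I would record that $\hat{\delta}$ may take the value $+\infty$ when $X_{\a}$ and $Y_{\a}$ differ in dimension at some level, which is harmless for positive-definiteness.
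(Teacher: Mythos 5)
Your proof is correct and follows essentially the same route as the paper's: reduce to the single-complex case level-by-level, observe that a supremum of non-negative terms vanishes exactly when every term vanishes, and invoke Lemma \ref{2} and Proposition \ref{main1} to obtain level-wise isomorphisms $(X_{\a},f) \cong (Y_{\a},g)$. Your final coherence step --- noting that the vertex map $\phi_0$ determined by $f = g \circ \phi_0$ is independent of $\a$, so the level-wise isomorphisms assemble into a single filtered isomorphism --- is actually a point the paper's proof passes over silently, and it is a worthwhile tightening of the argument.
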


\begin{proof}
$ $\newline
By definition, for each $\a>0$, each directed distance $\vec{d}((X_{\a},f),(Y_{\a},g))$ and $\vec{d}((Y_{\a},g),(X_{\a},f))$ is at least $0$. By definition as suprema, it follows that the directed distances for the filtered complexes $\vec{\mathsf{d}}((X,f),(Y,g))$ and $\vec{\mathsf{d}}((Y,g), (X,f))$ are non-negative. Hence, their maximum $\hat{\delta}\Big((X,f),(Y,g)\Big)$ is also non-negative. Moreover, $\hat{\delta}\Big((X,f),(Y,g)\Big) = 0$ if and only if $\vec{\mathsf{d}}((X,f),(Y,g)) = \vec{\mathsf{d}}((Y,g), (X,f))=0$, which is true if and only if $\vec{d}((X_{\a},f),(Y_{\a},g)) = \vec{d}((Y_{\a},g),(X_{\a},f)) =0$ for each $\a>0$. By lemma \ref{main1}, this means $(X_{\a},f) \cong (Y_{\a},g)$ for each $\a>0$. Therefore the filtered complexes $(X,f)$ and $(Y,g)$ are isomorphic.
\end{proof}

\begin{proposition}\label{filt2}
The simplicial Hausdorff distance for filtered complexes, $\hat{\delta}$, as defined in equation \ref{filtdist}, is symmetric.
\end{proposition}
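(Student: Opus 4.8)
The plan is to argue exactly as in the single-complex case, Proposition \ref{main2}. Recall that $\hat{\delta}$ is defined in equation \ref{filtdist} as the maximum of the two directed distances $\vec{\mathsf{d}}((X,f),(Y,g))$ and $\vec{\mathsf{d}}((Y,g),(X,f))$. Symmetry of $\hat{\delta}$ amounts to the claim $\hat{\delta}((X,f),(Y,g)) = \hat{\delta}((Y,g),(X,f))$, which I would establish directly by writing out both sides.

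First I would note that the left-hand side is $\max\{\vec{\mathsf{d}}((X,f),(Y,g)),\,\vec{\mathsf{d}}((Y,g),(X,f))\}$, while interchanging the roles of the two pairs gives $\hat{\delta}((Y,g),(X,f)) = \max\{\vec{\mathsf{d}}((Y,g),(X,f)),\,\vec{\mathsf{d}}((X,f),(Y,g))\}$. The key—and only—observation is that the maximum of a two-element set is agnostic to the order in which its elements are listed, so the two right-hand sides coincide. This is the identical mechanism used for $\delta$ in Proposition \ref{main2}.

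There is no genuine obstacle here. In particular, the inner suprema over $\a > 0$ appearing in the definition of each $\vec{\mathsf{d}}$ play no role in the argument: the symmetry is a purely formal consequence of the outer $\max$ being order-insensitive. It is worth emphasizing, as a sanity check, that the directed distances themselves are \emph{not} symmetric—indeed $\e$-closeness is not a symmetric relation, as Figure \ref{fig8} illustrates—which is exactly why the outer maximum is needed to symmetrize them, and why no property of the filtration beyond the structure of equation \ref{filtdist} is required.
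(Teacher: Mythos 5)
Your proof is correct and follows exactly the same route as the paper's own argument: symmetry of $\hat{\delta}$ is a purely formal consequence of the outer maximum in equation \ref{filtdist} being insensitive to the order of its two arguments, just as in Proposition \ref{main2}. Your additional remark that the directed distances $\vec{\mathsf{d}}$ themselves are not symmetric is a worthwhile sanity check, but it does not change the substance of the argument.
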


\begin{proof}
$ $\newline
The symmetry follows by its definition as a maximum of a finite set.
\end{proof}

\begin{proposition}\label{filt3}
The simplicial Hausdorff distance for filtered complexes, $\hat{\delta}$, follows the triangle inequality.
\end{proposition}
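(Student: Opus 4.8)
The plan is to reduce the filtered triangle inequality to the single-complex directed triangle inequality established inside the proof of Proposition \ref{main3}, applied level-by-level in the filtration parameter $\a$, and then to pass to suprema. Consider three filtered complexes $(X,f)$, $(Y,g)$, and $(Z,h)$ from $\X^d$. For each fixed $\a>0$, the slices $(X_\a,f)$, $(Y_\a,g)$, and $(Z_\a,h)$ all lie in $\X^d$, so the directed distance obeys
$$\vec{d}\left((X_\a,f),(Z_\a,h)\right) \leq \vec{d}\left((X_\a,f),(Y_\a,g)\right) + \vec{d}\left((Y_\a,g),(Z_\a,h)\right),$$
exactly as in the directed portion of Proposition \ref{main3} (equivalently, via Lemma \ref{ec} on $\e$-closeness at level $\a$).

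First I would bound each summand on the right by its supremum over the filtration, so that for every $\a>0$,
$$\vec{d}\left((X_\a,f),(Z_\a,h)\right) \leq \vec{\mathsf{d}}\left((X,f),(Y,g)\right) + \vec{\mathsf{d}}\left((Y,g),(Z,h)\right).$$
The right-hand side no longer depends on $\a$, so I would then take the supremum over $\a>0$ on the left; since a supremum of quantities each bounded by a fixed constant is itself bounded by that constant, this yields
$$\vec{\mathsf{d}}\left((X,f),(Z,h)\right) \leq \vec{\mathsf{d}}\left((X,f),(Y,g)\right) + \vec{\mathsf{d}}\left((Y,g),(Z,h)\right).$$
Finally, bounding each directed filtered distance by the corresponding $\hat{\delta}$, which dominates it as a maximum, gives $\vec{\mathsf{d}}\left((X,f),(Z,h)\right) \leq \hat{\delta}\big((X,f),(Y,g)\big) + \hat{\delta}\big((Y,g),(Z,h)\big)$. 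An identical argument with the roles reversed bounds $\vec{\mathsf{d}}\left((Z,h),(X,f)\right)$ by the same sum, and taking the maximum of the two directed quantities delivers the claim $\hat{\delta}\big((X,f),(Z,h)\big) \leq \hat{\delta}\big((X,f),(Y,g)\big) + \hat{\delta}\big((Y,g),(Z,h)\big)$.

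The main obstacle I anticipate is the interchange of the supremum with the sum: one cannot assert $\sup_{\a}(a_\a + b_\a) = \sup_{\a} a_\a + \sup_{\a} b_\a$ in general, so the argument must instead bound each $a_\a$ and $b_\a$ by its own supremum \emph{before} summing, keeping the right-hand side constant in $\a$ so that a single outer supremum can be taken cleanly. A secondary point to handle is that $\hat{\delta}$ is only an \emph{extended} metric, so some slices may have mismatched dimensions and contribute the value $+\infty$; since all the inequalities above remain valid in $[0,\infty]$, the triangle inequality persists in these degenerate cases as well.
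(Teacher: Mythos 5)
Your proof is correct and takes essentially the same route as the paper's: apply the per-level directed triangle inequality from Proposition \ref{main3} to the slices $(X_\a,f)$, $(Y_\a,g)$, $(Z_\a,h)$, pass to suprema over $\a>0$, bound the directed filtered distances by $\hat{\delta}$, and take the maximum of the two directions. Your explicit handling of the supremum step (bounding each summand by its own supremum before taking the outer supremum, rather than interchanging supremum and sum) is a more careful justification of what the paper compresses into the phrase ``evaluating for a supremum over all $\a>0$ preserves the inequality,'' but it is the same argument, not a different one.
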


\begin{proof}
$ $\newline
Suppose we have three filtered complexes $(X,f), \: (Y,g)$ and $(Z,h)$, such that for each index $\a>0$, the pairs $(X_{\a},f), \: (Y_{\a},g)$ and $(Z_{\a},h)$ are in the class $\X^d$. We know from lemma \ref{main3} that $\vec{d}((X_{\a},f),(Z_{\a},h)) \leq \vec{d}((X_{\a},f),(Y_{\a},g)) + \vec{d}((Y_{\a},g),(Z_{\a},h))$, for each index $\a>0$. Evaluating for a supremum over all $\a>0$ preserves the inequality and yields: $\vec{\mathsf{d}}((X,f),(Z,h)) \leq \vec{\mathsf{d}}((X,f),(Y,g)) + \vec{\mathsf{d}}((Y,g),(Z,h))$. Since $\hat{\delta}$ is a maximum, this, in turn, implies that $\vec{\mathsf{d}}((X,f),(Z,h))  \leq \hat{\delta}\Big(((X,f),(Y,g)\Big) + \hat{\delta}\Big((Y,g),(Z,h)\Big)$. By a similar argument, we obtain that $\vec{\mathsf{d}}((Z,h),(X,f))  \leq \hat{\delta}\Big(((X,f),(Y,g)\Big) + \hat{\delta}\Big((Y,g),(Z,h)\Big)$. Consequently, the maximum of $\vec{\mathsf{d}}((X,f),(Z,h))$ and $\vec{\mathsf{d}}((Z,h),(X,f))$ is also bounded above by $\hat{\delta}\Big(((X,f),(Y,g)\Big) + \hat{\delta}\Big((Y,g),(Z,h)\Big)$. Hence,
$$\hat{\delta}\Big(((X,f),(Z,h)\Big) \leq \hat{\delta}\Big(((X,f),(Y,g)\Big) + \hat{\delta}\Big((Y,g),(Z,h)\Big).$$
\end{proof}

Thus we have proved that:

\begin{theorem}
The simplicial Hausdorff distance for filtered complexes, $\hat{\delta}$, defined in equation \ref{filtdist}, is an \textit{extended} metric. 
\end{theorem}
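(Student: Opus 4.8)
The plan is to assemble this theorem directly from the three structural propositions already established for $\hat{\delta}$, exactly mirroring the proof of Theorem~\ref{mainthm} for the single-complex distance $\delta$. First I would invoke Proposition~\ref{filt1} to record positive-definiteness (non-negativity together with $\hat{\delta}=0$ if and only if the filtered complexes are isomorphic), Proposition~\ref{filt2} for symmetry, and Proposition~\ref{filt3} for the triangle inequality. These three facts are precisely the metric axioms, so the substantive work has already been carried out in the preceding propositions and this theorem is essentially a bookkeeping statement.

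The remaining point to address is why $\hat{\delta}$ is an \emph{extended} metric rather than an ordinary one, that is, why it may attain the value $+\infty$. There are two mechanisms. As in the unfiltered case, if at some filtration index $\a$ the slices $X_{\a}$ and $Y_{\a}$ have different dimensions, then for a top-dimensional simplex $\sigma$ there is no $k$-simplex $\tau$ of matching dimension, the inner minimum is taken over the empty set and is set to $+\infty$, and consequently the directed distance $\vec{d}\left((X_{\a},f),(Y_{\a},g)\right)$ is infinite. Second, even when every slice is finite, the supremum $\sup_{\a>0}\vec{d}\left((X_{\a},f),(Y_{\a},g)\right)$ defining $\vec{\mathsf{d}}$ may diverge. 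In either situation $\hat{\delta}$ takes the value $+\infty$, which is exactly why its codomain is the extended half-line $[0,\infty]$.

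The main obstacle, though it is a minor one, is to confirm that the three axioms survive the passage to extended-real values. For positive-definiteness and symmetry nothing changes, since these rest only on non-negativity and on the order-independence of a two-element maximum. For the triangle inequality one should verify that Proposition~\ref{filt3} remains valid when some of the distances are infinite: the inequality $\hat{\delta}(X,Z)\leq \hat{\delta}(X,Y)+\hat{\delta}(Y,Z)$ holds trivially whenever the right-hand side equals $+\infty$, and the finite case is precisely what Proposition~\ref{filt3} establishes. With the arithmetic conventions $a+\infty=\infty$ and the supremum of an unbounded set equal to $\infty$ in force, the axioms hold uniformly across $[0,\infty]$, and the theorem follows.
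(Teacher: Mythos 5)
Your proposal is correct and follows essentially the same route as the paper: it assembles the theorem from Propositions \ref{filt1}, \ref{filt2}, and \ref{filt3} and attributes the \emph{extended} qualifier to a dimension mismatch between $X_{\a}$ and $Y_{\a}$ at some filtration index, which is exactly the paper's argument (your extra check that the axioms survive extended-real arithmetic is a harmless refinement the paper leaves implicit). One minor inaccuracy worth noting: your second mechanism for infinity, a divergent supremum with all slices finite, cannot actually occur here, since the vertex sets $X_0$ and $Y_0$ are finite and fixed across the filtration, so every finite directed distance is bounded by the diameter of $f(X_0)\cup g(Y_0)$; this over-caution does not affect the validity of the proof.
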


\begin{proof}
As demonstrated in propositions \ref{filt1}, \ref{filt2} and \ref{filt3}, $\hat{\delta}$ satisfies all the axioms for a metric. It will take on infinite values if there exists $\a$ for which $X_{\a}$ and $Y_{\a}$ have different dimensions.
\end{proof}

\section{Monotonicity of measurement functions}\label{mono}

Recall that the simplicial Hausdorff distance is defined on the class of pairs
\begin{eqnarray} \displaystyle 
    \nonumber \mathbb{X}^d := \: \biggl\{(X,f): &X \text{ is an abstract simplicial complex,} \text { and }  f \text{ is a one-to-one measurement} \\
   \nonumber &\text{function with } f:X_0\rightarrow \mathbb{R}^d, \text { and } X_0 \text{ is the set of vertices of } X \biggr\}\end{eqnarray}
 In the definition of the simplicial Hausdorff distance, we require the functions $f$ and $g$ to be one-to-one, and we discuss the motivations for this hypothesis here.


In practice, consider the sets of vertices $X_0 \leq X$ and $Y_0\leq Y$ to be similar to sample spaces. For example, these can represent groups of people in a certain population. The functions $f$ and $g$ are measurements with output in $\mathbb{R}^d$ (where $d$ is the number of different measurements taken, for instance, including height, weight, red blood cell count, cholesterol level, fingernail length, etc). In such a scenario, it is highly unlikely that any given two (or more) people will have the exact same readings for all $k$ measurements.
 
We also assume that the functions $f$ and $g$ are floating point-valued (this is different from being real-valued in that the set of floating points is discrete). If the measurements were all integer-valued with a short range, there would possibly not be much room for variability. However, if $f$ and $g$ are floating point-valued in at least one coordinate, then $f$ and $g$ are almost surely one-to-one. For example, there will likely be repeated scores if students are graded on a test over 20. Note, however, that this is for only one of the $d$ coordinates. Measurements in the other $d-1$ coordinates will likely differ for any two students who have the same test grade. So, the type and measurement resolution of the data may also influence the monotonicity of the $f$ and $g$. This fact of \emph{general position} is one reason why we can hypothesize that $f$ and $g$ are one-to-one without loss of generality. Almost every dataset of interest is such that each data point in $X_0$ can be identified as one unique vertex of the simplicial complex $X$.

\subsection{When there are repeats in node coordinates of measurement functions}\label{2.6.1}
Now, consider the highly unlikely scenario where there are actually repeats in the output of $f$ and $g$. The concern should be whether or not this will change the topology of the underlying sample space. The answer is no, and we discuss why in this section. We show that a Vietoris-Rips complex featuring these clusters of repeats is homotopy equivalent to a Vietoris-Rips complex in which every cluster of coinciding vertices is treated as a single vertex. Consequently, this is another reason we can hypothesize that $f$ and $g$ are injective, without loss of generality\footnote[1]{The repeats do matter, however, for statistical purposes. Consider, for example, that the mean of $2$ and $3$ differs from the mean of $2$, $3$ and $3$.}.


The \emph{link} of a vertex is a concept that essentially captures the local structure around a particular vertex within the larger simplicial complex. The precise definition is as follows:

\begin{definition}[Link of a vertex]
Let $K$ be an abstract simplicial complex, and $v$ be a vertex in $K$. The link of a vertex $v \in K$ is the subcomplex of $K$ described as:
$$\text{link}_K(v) = \bigl\{\s \in K: v \notin \s \text { and } v \cup \s \in K \bigr\}.$$
\end{definition}

The concept of links may be extended from vertices to simplices. The link of a simplex $\s \in K$ is the subcomplex $\dsp \text{link}_K(\s) = \bigl\{\t\in K: \t \cap \s = \emptyset, \: \t \cup \s \in K\bigr\}$. A notion of a \emph{cone} within a simplicial complex is described as follows.

\begin{definition}[Simplicial cone]
Let $K$ be a simplicial complex and $w$ be a vertex not belonging to the vertex set of $K$. The cone with apex $w$ and base $K$ is the simplicial complex formed by the join of $K$ with the vertex $w$, described as follows:
$$w*K = K \cup \bigl\{\s\cup \{w\}: \s \in K\bigr\}.$$
\end{definition}

In the theory of homotopy types of simplicial complexes, there are certain notions of elementary transformations that do not alter the homotopy type of a simplicial complex. One of these is known as \emph{simplicial collapse}, which was introduced by J.H.C. Whitehead in the late
1930s \cite{whitehead}. A condition for a simple simplicial collapse is \emph{vertex domination}, which is defined below.

\begin{definition}[Vertex domination]
A vertex $v$ of a simplicial complex $K$
is called \emph{dominated} if its link is a simplicial cone. That is, $v$ is \emph{dominated} by another vertex $w$ if each simplex $\s \in K$ containing $v$ is such that $\s\cup \{w\} \in K$. That is, link$_K(v)$ is a cone with apex $w$.
\end{definition}

\begin{definition}[Barmak and Minian\cite{adams2}]
Let $K$ be a simplicial complex, and $v\in K$ be a vertex. Let $K\setminus v$ denote the full subcomplex of $K$ spanned by the vertices different from $v$. We say that there is an \emph{elementary strong collapse} $K$ to $K\setminus v$ if the link $\text{link}_K(v)$ is a simplicial cone $w*L$. There is a \emph{strong collapse} from a complex $K$ to a subcomplex $L$ if there exists a sequence of elementary strong collapses that start at $K$ and end at $L$. The complexes $K$ and $L$ possess the same \emph{strong homotopy type}.\end{definition}

If vertex $v \in K$ is dominated, then $K \simeq K\setminus \{v\}$ because the link of $v$ is contractible. Indeed, there exists a deformation retraction from $K$ to $K\setminus \{v\}$ which maps $v$ to $w$ and also achieves a simplicial collapse of $K$ to $K\setminus \{v\}$ \cite {adams1}. These elimination operations are referred to by different terms in the literature, such as folds, elementary strong collapses, and LC reductions \cite{adams2,adams3,adams4}. 

\subsection{Special case: Vietoris-Rips complexes}
Now let $K:=VR(S,f,r)$ be a Vietoris-Rips complex arising from a point sample in space $S$ with measurement function $f: S\rightarrow \R^d$ with proximity value $r>0$. Suppose $k$ points in this sample have a coincident image in $\R^d$ by the measurement function $f$. That is, suppose there exists $s_1,\: s_2,\: ..., \: s_k \in S$ with $f(s_i)=v_0$, $i: 1\leq i\leq k$, for some vertex $v_0$ in $\R^d$. The relation defined by $x\sim y$ if and only if $f(x)=f(y)$ for two points $x$ and $y$ in $S$ is an equivalence relation whose equivalence classes are precisely all collections of points in $S$ with distinct vertices in $\R^d$. The resulting quotient space is written $S/\sim$ and has cardinality less than $S$. In constructing $S/\sim$, we ``divide'' by $\sim$ in the sense that we identify two points in $S$ if and only if they yield coincident vertices in $\R^d$. Hence, consider the Vietoris-Rips complex, $VR(S/\sim,f,r)$ built from $S/\sim$, which takes the $k$ coincident vertices as just one vertex.

More generally, consider the case where there are $m$ such clusters of repeated points so that the $i^{th}$ cluster has $k_i$ points in $S$ coincident to some point in $\R^d$, $1\leq i \leq m$.  

\begin{proposition}\label{complex2}
The Vietoris-Rips complexes $VR(S,f,r)$ and $VR(S/\sim,f,r)$ are homotopy equivalent. That is,
\begin{eqnarray*}{}
VR(S,f,r) \simeq  VR(S/\sim,f,r), \text{ where } x\sim y \text{ if } f(x)=f(y), \text { for any } x, \:y\: \in S.
\end{eqnarray*}
\end{proposition}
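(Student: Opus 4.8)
The plan is to exploit the strong-collapse machinery developed just above, by showing that within each cluster of coincident points all but one vertex is dominated, and then collapsing these redundant vertices one at a time until a single representative of each cluster remains. Concretely, fix a cluster $\{s_1, \dots, s_k\}$ with $f(s_i) = v_0$ for all $i$, and designate $s_1$ as the representative. First I would observe that any two points in the same cluster are ``clones'': since $f(s_i) = f(s_j)$, for every other vertex $t$ we have $d(f(s_i), f(t)) = d(f(s_j), f(t))$, so $s_i$ and $s_j$ have exactly the same neighbors in $VR(S,f,r)$.

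Next I would verify that $s_i$ (for $i \geq 2$) is dominated by $s_1$. Let $\sigma$ be any simplex of $VR(S,f,r)$ containing $s_i$; I must show that $\sigma \cup \{s_1\}$ is again a simplex. For every $u \in \sigma$ the membership of $\sigma$ gives $d(f(s_i), f(u)) \leq r$, and since $f(s_1) = v_0 = f(s_i)$ this is the same as $d(f(s_1), f(u)) \leq r$; also $d(f(s_1), f(s_i)) = 0 \leq r$. Hence all pairwise distances in $\sigma \cup \{s_1\}$ are at most $r$, so $\sigma \cup \{s_1\} \in VR(S,f,r)$, i.e. $\text{link}(s_i)$ is a cone with apex $s_1$. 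By the domination result cited above, $VR(S,f,r) \simeq VR(S,f,r) \setminus \{s_i\}$.

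Then I would iterate: after deleting $s_i$, the same distance computation shows that the remaining clones are still dominated by $s_1$ in the smaller complex, since any simplex containing a surviving clone avoids the already-deleted ones, and adjoining $s_1$ still yields a simplex that avoids them. Performing the collapses $s_2, \dots, s_k$ in turn, and then repeating the whole procedure for each of the $m$ clusters independently (deletions in one cluster do not affect domination in another, as the argument is local to that cluster's common image), I arrive at the full subcomplex of $VR(S,f,r)$ spanned by the chosen representatives. The final step is to identify this subcomplex with $VR(S/\sim, f, r)$: a set of representatives forms a simplex precisely when their pairwise images lie within $r$, which is exactly the Vietoris-Rips condition defining $VR(S/\sim, f, r)$, since $\sim$ identifies points with equal image.

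I expect the main obstacle to be the bookkeeping in the iteration --- making sure domination genuinely persists after each elementary strong collapse, and that collapses across different clusters do not interfere --- together with the final verification that the collapsed complex is literally $VR(S/\sim, f, r)$ rather than merely homotopy equivalent to it. Both are ultimately routine, since the entire argument depends only on distances between images in $\R^d$, but they must be stated carefully to justify that a single chain of strong collapses carries $VR(S,f,r)$ all the way down to $VR(S/\sim,f,r)$.
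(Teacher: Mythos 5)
Your proposal is correct and follows essentially the same route as the paper's own proof: both exploit vertex domination to perform elementary strong collapses of the redundant clones in each cluster, one at a time, and then handle the $m$ clusters by iteration/induction until $VR(S/\sim,f,r)$ remains. In fact, your write-up is somewhat more careful than the paper's, since you explicitly verify the domination condition via the distance computation, check that domination persists after each deletion, and confirm that the final collapsed subcomplex is literally $VR(S/\sim,f,r)$.
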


\begin{proof}
$ $ \newline
Use induction on the number $m$ of clusters in $S$ with a repeated vertex in $\R^d$. Let $\sim_i$ denote the collapse of the $i^{th}$ cluster in $S$ to a single vertex. Note that for the $i^{th}$  cluster, the $k_i$ coincident vertices $s_{i_j}$, $1\leq j\leq k_i$, are such that $f(s_{i_j})$ are at Euclidean distance $0$ from each other and so $s_{i_j}$ and $s_{i_l}$ are connected by an edge for $j\neq l$. Moreover, any other vertex $w$ in $S$ is either connected to all $s_{i_j}$ or connected to none.  Hence, $s_{i_1}$ dominates $s_{i_2}$, and the link of $s_{i_2}$ is a simplicial cone with apex $s_{i_1}$, which is contractible. Therefore, there is a simplicial collapse of $K$ to $K\setminus \{s_{i_2}\}$ without changing the homotopy type. A similar simplicial collapse can be done using $s_{i_3}$. Continuing in this manner, we would have removed all vertices $s_{i_j}$ for $j: 1 \leq j \leq k_i$, to achieve $VR(S/\sim_i,f,r)$ without changing the homotopy type, leaving single vertices $s_{i_1}$ at the position of the $i^{th}$ cluster. By induction, this process yields $VR(S/\sim,f,r)$. 
\end{proof}

Essentially, if two points lie on top of each other in a point cloud, the resulting Vietoris-Rips complex (with any given threshold value) will be homotopy equivalent to the simplicial complex where both points are collapsed to one. This is so because both coincident points, say, $s_1$ and $s_2$, participate in the same simplices in the complex, and a $1$-simplex (homotopy equivalent to a point) exists between $s_1$ and $s_2$.  
This phenomenon carries on to higher dimensions. If $k$ points are coincident, a (standard) $k$-simplex (which is homotopy equivalent to a point) exists between them, and all $k$ simplices participate in the same simplices in the filtration. In other words, any other point is a neighbor to one of these $k$ coincident points if and only if it is also a neighbor to all the other $k-1$.

\subsection{When monotonicity fails}
If the measurement functions fail to be injective, then the positive-definiteness axiom (see lemma \ref{2}) of the simplicial Hausdorff distance, $\delta$, is not satisfied. Consequently, $\delta$ becomes a \emph{pseudometric} instead. With a pseudometric, it is possible to obtain a distance of zero for elements that are distinct from each other. Although elements with distance $0$ are considered equivalent, a pseudometric would not distinguish between simplicial complexes that may be distinct yet topologically equivalent, as in the case of $VR(S,f,r)$ and $VR(S/\sim,f,r)$ discussed in the previous section \ref{2.6.1}. In essence, even though a pseudometric will distinguish between simplicial complexes with different homotopy types, a pseudometric could consider non-isomorphic simplicial complexes to be the same. Topological information remains intact despite this loss of precision.

\subsection{Summary and future directions}

This article introduces a new notion of a simplicial Hausdorff distance and delves into computational aspects, showing its algorithmic complexity and demonstrating a version for filtered complexes. Moreover, it examines the implications of having measurement functions that are not injective, which can lead to pseudometric properties where distinct elements might have a zero distance when not topologically distinguishable.

Future directions include improving computational complexity of the method, and investigating a stability result for simplicial complexes in $\X^d$ that used the simplicial Haudorff distance to quantify robustness. Another consequent pursuit is the integration of these methods with other advanced statistical and topological deep learning techniques. By combining TDA with methods such as clustering, classification, and neural networks, we can potentially uncover deeper insights from complex data structures and improve the robustness of topological feature extraction.

Researchers are invited to apply this distance\footnote{\url{{https://github.com/NkechiC/Modified-Haudorff-Distance}}} to their simplicial complex data sets. This code is written in \emph{R} and designed to be compatible with the \emph{TDA} package (v 1.9, Fasy et al 2023\cite{rtda}) in \emph{R}, which can generate representative homology cycles as part of the output of a persistent homology routine on a point cloud data set.

\newpage
\printbibliography
\end{document}